\numberwithin{equation}{section}
\newtheorem{thm}{Theorem}[section]
\newtheorem{lem}[thm]{Lemma}
\newtheorem{defn}[thm]{Definition}
\newtheorem{clm}[thm]{Claim}
\colorlet{texcscolor}{blue!50!black}
\colorlet{texemcolor}{red!70!black}
\colorlet{texpreamble}{red!70!black}
\colorlet{codebackground}{black!25!white!25}
\lstdefinestyle{siamlatex}{%
	style=tcblatex,
	texcsstyle=*\color{texcscolor},
	texcsstyle=[2]\color{texemcolor},
	keywordstyle=[2]\color{texemcolor},
	moretexcs={cref,Cref,maketitle,mathcal,text,headers,email,url},
}
\DeclareTotalTCBox{\code}{ v O{} }
{ 
	fontupper=\ttfamily\color{black},
	nobeforeafter,
	tcbox raise base,
	colback=codebackground,colframe=white,
	top=0pt,bottom=0pt,left=0mm,right=0mm,
	leftrule=0pt,rightrule=0pt,toprule=0mm,bottomrule=0mm,
	boxsep=0.5mm,
	#2}{#1}
\patchcmd\newpage{\vfil}{}{}{}
\newcommand{\RN}{\mathbb{R}^N}
\newcommand{\ot}{\Omega_T }
\newcommand{\po}{\partial\Omega}
\newcommand{\mdiv}{\textup{div}}
\newcommand{\io}{\int_{\Omega}}
\newcommand{\ioT}{\int_{\Omega_{T}}}
\newcommand{\ve}{\varepsilon}
\newcommand{\vp}{\varphi}
\newcommand{\vep}{\varepsilon}
\newcommand{\no}{n_1}
\newcommand{\nt}{n_2}
\newcommand{\uo}{n_1}
\newcommand{\ut}{n_2}
\newcommand{\vgm}{v^{(\gamma)}}
\newcommand{\we}{\nne}
\newcommand{\weg}{\nneg}
\newcommand{\nneg}{\left(n^{(\ep)}\right)}
\newcommand{\ome}{w^{(\ep)}}
\newcommand{\re}{R^{(\ep)}}
\newcommand{\nog}{n_1^{(\gamma)}}
\newcommand{\ntg}{n_2^{(\gamma)}}
\newcommand{\uoe}{n_1^{(\ep)}}
\newcommand{\ute}{n_2^{(\ep)}}
\newcommand{\de}{d^{(\ep)}}
\newcommand{\dg}{d^{(\gamma)}}
\newcommand{\nng}{n^{(\gamma)}}
\newcommand{\nnf}{n^{(\infty)}}
\newcommand{\wg}{w^{(\gamma)}}
\newcommand{\nne}{n^{(\ep)}}
\newcommand{\pt}{\partial_t}
\newcommand{\ep}{\varepsilon}
\newcommand{\ra}{\rightarrow}
\title[A tissue growth model with autophagy
] 
{ Existence and incompressible limit of a tissue growth model with autophagy}
\author[Jian-Guo Liu and Xiangsheng Xu]{}
\subjclass{Primary: 35B45, 35B65, 35Q92, 35K51.}
\keywords{	Autophagy; existence; incompressible limit; tissue growth models. {\it SIAM J. Math. Anal.}, to appear.
}
\email{jliu@phy.duke.edu}
\email{xxu@math.msstate.edu}
\begin{document}
	\maketitle
	
	\centerline{\scshape Jian-Guo Liu and Xiangsheng Xu}
	\medskip
	{\footnotesize
		\centerline{Department of Physics and Department of Mathematics} 
	\centerline{	Duke University}
	\centerline{ Durham, NC 27708, USA and}
		\centerline{Department of Mathematics \& Statistics}
		\centerline{Mississippi State University}
		\centerline{ Mississippi State, MS 39762, USA}
	} 
	
	\begin{abstract} In this paper we study a cross-diffusion system whose coefficient matrix is non-symmetric and degenerate.
		The system arises in the study of tissue growth with autophagy. The existence of a weak solution is established. We also investigate the limiting behavior of solutions as the pressure gets stiff. The so-called incompressible limit is a free boundary problem of Hele-Shaw type. Our key new discovery is that the usual energy estimate still holds as long as the time variable stays away from $0$.
		
	\end{abstract}
	
	\bigskip


	\section{Introduction}
Let $\Omega$ be a bounded domain in $\mathbb{R}^N$ with Lipschitz boundary
$\po$ and $T$ any positive number. We consider the initial boundary value problem 
\begin{eqnarray}
\partial_t\no-\mdiv\left(\no\nabla p\right)&=& G(d)\no - K_1(d)\no + K_2(d)\nt\equiv R_1\nonumber\\
&& \mbox{in $\ot\equiv\Omega\times(0,T)$,}\label{euo}\\
\partial_t\nt-\mdiv\left(\nt\nabla p\right)&=& (G(d)-D)\nt +K_1(d)\no - K_2(d)\nt\equiv R_2\  \ \mbox{in $\ot$,}\label{eut}\\
b \pt d-\Delta d&=&-\psi(d)n+a\nt\ \ \mbox{in $\ot$,}\label{ed}\\
\no\nabla p\cdot\mathbf{n}&=&\nt\nabla p\cdot\mathbf{n}=0\  \ \mbox{on $\Sigma_T\equiv\po\times(0,T) $,}\label{uob}\\
d&=&d_b\  \ \mbox{on $\Sigma_T$,}\label{cb}\\
\hspace{.2in}	(\uo(x,0),\ut(x,0), d(x,0))&=&(n_{01}(x), n_{02}(x), d_0(x))\ \ \mbox{on $\Omega$, }\label{uicon}
\end{eqnarray}
where $\mathbf{n}$ is the unit outward normal to $\po$ and
\begin{eqnarray}
n=\uo+\ut,\ \ p=n^\gamma,\ \ \gamma\geq 1.\label{wdef}
\end{eqnarray}
This problem was proposed as a tissue growth model with autophagy in \cite{DLZ}.  In the model, cells are classified into two phases: normal cells and autophagic cells, and
$n_1, n_2$ are their respective densities.  The third unknown function $d$ represents the concentration of nutrients. We assume that both cells have the same birth rate. Their death rates are different because autophagic cells have an extra death rate $D$ due
to the “self-eating” mechanism. 
Thus if  $G(d)$ is the net growth rate of normal cells then $G(d)-D$ gives the net growth rate for autophagic cells.
Two types of cells can change from one to another.  The transition rates
are denoted by $K_1(d), K_2(d)$, respectively. Since autophagy is a reversible process, we have
\begin{equation}\label{con1}
K_1(d)\geq 0,\ \  K_2(d)\geq 0.
\end{equation}
Both cells consume nutrients with the consumption rate $\psi(d)$. However, autophagic cells also provide
nutrients by degrading its own constituents with a supply rate $a$. 
We assume
\begin{equation}\label{con2}
D, a \in (0,\infty).
\end{equation}
Moreover,
\begin{equation}\label{dz}
	\psi(0) = 0, \ \mbox{$\psi(d)$ is increasing, and there is $d_0 > 0$ such that $ \psi(d_0) = a$}. 
\end{equation}
The first condition in \eqref{dz} means that when there is no
nutrient the consumption rate should be zero. 
The number $d_0$ is the so-called critical nutrient concentration. 
When $ d < d_0$
autophagic cells supplies more nutrients than they consume, while
$d > d_0$ indicates that autophagic cells consumes more nutrients than they supply.

For the spatial motion of cells, we take a fluid mechanical point of view. That is, it is driven by a
velocity field equals to the negative gradient of the pressure (Darcy’s law) \cite{PQV}. And the pressure arises from
mechanical contact between cells. Denote by $p$ the pressure. Then we can assume that \eqref{wdef}, \eqref{euo}, and \eqref{eut} hold.

One can also model tissue growth as free boundary problems \cite{F}. They are also called geometric or incompressible models and describe tissue as a moving
domain (see \cite{DP} and the references therein).
Building a link between these two classes of models has attracted the attention of many researchers
in recent years. The first result in this direction was obtained in \cite{PQV} for a purely mechanical model. It indicates that  the limit of the mechanical model  gives rise to a free boundary 
problem
 as the pressure becomes stiff. 
Since then the same result has been achieved
for a variety of models, which included active motion \cite{PQTV}, viscosity \cite{PV}, different laws of state \cite{DHV},
 more than one species of cells \cite{BPPS}, and multi-space dimensions and viscosity \cite{DPSV}. In each case the limit model turns out to be a free boundary
model of Hele-Shaw type.

The objective of this paper is to study the existence assertion for \eqref{euo}-\eqref{uicon} and the limiting behavior of solutions as $\gamma\ra\infty$.

We largely follow the approach adopted in \cite{PX} for the existence assertion. To understand the nature of the limiting model for our problem,
we define a family of maximal monotone graphs \cite{BCS} in $\mathbb{R}\times\mathbb{R}$ by
\begin{equation*}
	\vp_\gamma(s)=\left(s^+\right)^{\gamma+1}=\left\{\begin{array}{ll}
		s^{\gamma+1} &\mbox{if $s\geq 0$,}\\
		0&\mbox{if $s<0$}.
	\end{array}\right.
\end{equation*}
Obviously,
\begin{equation}\label{hope20}
	\vp_\gamma(s)\rightarrow \vp_\infty(s)\equiv\left\{\begin{array}{ll}
		[0,\infty) &\mbox{if $s= 1$,}\\
		0&\mbox{if $s<1$}
	\end{array}\right.
\end{equation}
in the sense of graphs as $\gamma\rightarrow\infty$ \cite{BCS}. The total density $n=\nng$ satisfies the problem
\begin{eqnarray}
	\pt\nng-\frac{\gamma}{\gamma+1}\Delta\vgm&=&G(d^{(\gamma)})\nng-Dn_2^{(\gamma)}\equiv R^{(\gamma)}\ \ \mbox{ in $\ot$},\nonumber\\
\vgm&=& \left(\nng\right)^{\gamma+1}\ \ \mbox{a.e. on $\ot$,}\label{hope12}\\
	\nabla \vgm\cdot\mathbf{n}&=&0\ \ \mbox{on $\Sigma_T$,}\nonumber\\
	\nng(x,0)&=&n_0\equiv n_{01}+n_{02}\ \ \mbox{on $\Omega$.}\nonumber
\end{eqnarray} 
Thus if we formally take $\gamma\rightarrow\infty$, we expect to arrive at the following problem
\begin{eqnarray}
	\pt n^{(\infty)}-\Delta v^{(\infty)}&=&G(d^{(\infty)})n^{(\infty)}-Dn_2^{(\infty)}\equiv R^{(\infty)}\ \ \mbox{ in $\ot$},\label{re4}\\
	v^{(\infty)}&\in&\vp_{\infty}(n^{(\infty)})\ \ \mbox{a.e. on $\ot$,}\label{re3}\\
	\nabla v^{(\infty)}\cdot\mathbf{n}&=&0\ \ \mbox{on $\Sigma_T$,}\label{re10}\\
	n^{(\infty)}(x,0)&=&n_0\ \ \mbox{on $\Omega$.}\label{re5}
\end{eqnarray} 
If $n_0\leq 1$ a.e on $\Omega$, a result of \cite{BCR} asserts that the limit problem \eqref{re4}-\eqref{re5} has an integral solution $n^{(\infty)}$ and $\lim_{\gamma\ra\infty}\nng= n^{(\infty)}$ in $L^1(0,T; L^1(\Omega))$ (also see \cite{X2} for related results). If $n_0> 1$ on a set of positive measure, the initial condition is no longer compatible with $\vp_{\infty}$ and the resulting problem \eqref{re4}-\eqref{re5} becomes singular.
Thus identifying the limit of the sequence $\{\nng\}$ is an interesting issue.  When $R^{(\gamma)} \equiv 0$, this problem was solved in \cite{CF} through an application of the Aronson-B\'{e}nilan inequality \cite{AB}
\begin{equation}\label{abi}
	\pt\nng\geq -\frac{\nng}{\gamma t}.
\end{equation}
The precise result there is: If $\Omega=\RN$, $n_0(x)$ has a star-shaped profile, and $R^{(\gamma)}$=0,  then $ n^{(\infty)}\equiv\lim_{\gamma\ra\infty}\nng$ exists and is given by
$$n^{(\infty)}(x) =\left\{\begin{array}{ll}
	1& \mbox{if $x\in A$,}\\
	n_0(x)&\mbox{if $x\notin A$,}
\end{array}\right.$$
where $A$ is the coincident set of the solution of the following variational inequalities
$$-\Delta w\geq n_0-1,\ \ w\geq 0,\ \ \left(\Delta w+ n_0-1\right)w=0\ \ \mbox{in $\RN$.}$$
A remarkable fact is that the limit $ n^{(\infty)}$ is a function of $x$ only. A similar result was established for hyperbolic conservation laws in \cite{X2}. However, if  $R^{(\gamma)}$ changes sign, inequalities of the Aronson-B\'{e}nilan type no longer hold \cite{PTV}. To circumvent this difficulty, the authors of  \cite{DP}  established a weaker version of \eqref{abi} along with an $L^4$ estimate for the gradient of the pressure. Our problem here does not quite fit the framework developed in \cite{DP}. This forces us to take 
a totally different approach. It seems more convenient for us to work with $\vgm=\left(\nng\right)^{\gamma+1} $ instead of the pressure. Our key estimate is:
\begin{equation}
	\int_{\tau}^{T}\io\left(\vgm\right)^2dxdt+\int_{\tau}^{T}\io\left|\nabla\vgm\right|^2dxdt\leq \frac{c}{\tau} \ \ \mbox{for all $\gamma\geq 1$ and $\tau\in (0,T)$.}\nonumber
\end{equation}
Here and in what follows the letter $c$ denotes a generic positive constant whose value is determined by the given data. That is,  the sequence $\{\vgm\}$ is bounded in $L^2(\tau, T;W^{1,2}(\Omega))$ for each $\tau\in (0,T)$.

Before we introduce our remaining results, we state the definition of a weak solution.
\begin{defn}\label{defs}We say that $(\uo, \ut, d)$ is a weak solution to \eqref{euo}-\eqref{uicon} if:
	\begin{enumerate}
		\item[\textup{(D1)}]$\uo,\ut, d$ are all non-negative and bounded with
		\begin{equation}
		\partial_t\uo,\ \partial_t\ut,\ \pt d\in L^2(0,T; \left(W^{1,2}(\Omega)\right)^*),\  n^{\frac{\gamma+1}{2}}, \ d\in L^2(0,T; W^{1,2}(\Omega)),
		\end{equation}
		where $n$ is given as in \eqref{wdef} and $\left(W^{1,2}(\Omega)\right)^*$ denotes the dual space of $W^{1,2}(\Omega)$;
		\item[\textup{(D2)}]There hold
		\begin{eqnarray}
			\lefteqn{-\ioT\uo\partial_t\xi_1 dxdt+\ioT\uo\nabla n^\gamma\cdot\nabla\xi_1 dxdt}\nonumber\\
			&=&\ioT R_1\xi_1 dxdt-\langle \uo(\cdot, T), \xi_1(\cdot, T)\rangle+\io n_{01}(x)\xi_1(x, 0)dx\nonumber\\
			&& \mbox{for each $\xi_1\in H^1(0,T; W^{1,2}(\Omega))$, }\nonumber\\
			\lefteqn{	-\ioT\ut\partial_t\xi_2 dxdt+\ioT\ut\nabla n^\gamma\cdot\nabla\xi_2 dxdt}\nonumber\\
			&=&\ioT R_2\xi_2 dxdt-\langle \ut(\cdot, T), \xi_2(\cdot, T)\rangle+\io n_{02}(x)\xi_2(x, 0)dx\nonumber\\
			&& \mbox{for each $\xi_2\in H^1(0,T; W^{1,2}(\Omega))$, and}\nonumber\\
			\lefteqn{	-b\ioT d\partial_t\zeta dxdt+\ioT\nabla d\cdot\nabla\zeta dxdt}\nonumber\\
			&=&\ioT(-\psi(d)n+a\nt)\zeta dxdt-b\langle d(\cdot, T), \zeta(\cdot, T)\rangle+b\io d_0(x)\zeta(x, 0)dx\nonumber\\
			&& \mbox{for each $\zeta\in H^1(0,T; W_0^{1,2}(\Omega))$ },\nonumber
		\end{eqnarray}
		where $\langle \cdot,\cdot\rangle$ denotes the duality pairing between $W^{1,2}(\Omega)$ and $\left(W^{1,2}(\Omega)\right)^*$ and $H^1(0,T; W^{1,2}(\Omega))=\{v\in L^2(0,T; W^{1,2}(\Omega)):\pt v\in L^2(0,T; W^{1,2}(\Omega))\}$;
		\item[\textup{(D3)}]  \eqref{cb} is satisfied.
	\end{enumerate}
\end{defn}
To see that the three equations in (D2) make sense, we can conclude from (D1) that
$\uo,\ut, d\in C([0, T]; \left(W^{1,2}(\Omega)\right)^*)$. Since $ n$ is bounded and
$\gamma\geq \frac{\gamma+1}{2}$, we also have $n^\gamma\in  L^2(0,T; W^{1,2}(\Omega))$.
\begin{thm}\label{mth} 
	Assume:
	\begin{enumerate}
		\item[\textup{(H1)}] $G, K_1, K_2, \psi$ are all continuous functions;
		\item[\textup{(H2)}] \eqref{con1}, \eqref{con2}, and \eqref{dz} hold;
		\item[\textup{(H3)}]$b\in (0, \infty)$ and $\po$ is Lipschitz;
		\item[\textup{(H4)}] $n_{01}, n_{02}\in W^{1,2}(\Omega)\cap L^\infty(\Omega),\ d_0\in  L^\infty(\Omega),$ and $d_b\in  L^2(0,T; W^{1,2}(\Omega))\cap L^\infty(\ot)$. 
	\end{enumerate}
	Then there is a weak solution to \eqref{euo}-\eqref{uicon}.	
\end{thm}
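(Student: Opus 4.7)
The plan is to carry out a three-level approximation-compactness argument, largely following the strategy of \cite{PX}.

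First, I would regularize the degenerate cross-diffusion by fixing a small parameter $\ep>0$ and introducing an approximate pressure $p_\ep = (n+\ep)^\gamma$ (or a regularized $n^\gamma$), together with an artificial viscosity $-\ep\Delta n_i$ in the $n_i$-equations. For this regularized system I would solve each component by a Schauder fixed-point argument: given $(\tilde n_1,\tilde n_2,\tilde d)$ in a closed bounded convex set of non-negative $L^\infty$ functions, substitute these into the reaction terms and the velocity $-\nabla p_\ep[\tilde n]$, then solve three decoupled linear parabolic problems for $(n_1,n_2,d)$. Standard linear parabolic theory furnishes a continuous, compact self-map, and the uniform $L^\infty$ bound established in step two produces a fixed point.

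Second, I would derive $\ep$-uniform a priori estimates. Non-negativity of $n_1,n_2,d$ follows from testing the respective equations with negative parts and exploiting $K_1,K_2\geq 0$ together with $\psi(0)=0$. For $L^\infty$ bounds I would add \eqref{euo} and \eqref{eut} to obtain an equation for $n$ whose source is $G(d)n-Dn_2\leq\|G\|_\infty n$; comparison with $y'=\|G\|_\infty y$ gives $\|n\|_\infty\leq\|n_{01}+n_{02}\|_\infty e^{\|G\|_\infty T}$, which in turn bounds each $n_i\leq n$. For $d$ I would use Stampacchia truncation with the boundary datum $d_b$ and the monotonicity of $\psi$. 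The central gradient estimate comes from testing the equation for $n$ (a porous-medium-type equation with bounded source) with a suitable power of $n$, yielding a uniform bound on $n^{(\gamma+1)/2}$ in $L^2(0,T;W^{1,2}(\Omega))$. A duality argument applied to the $n_i$- and $d$-equations then gives $\pt n_i,\pt d\in L^2(0,T;(W^{1,2}(\Omega))^*)$ uniformly in $\ep$.

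Third, I would pass $\ep\to 0$. Aubin--Lions provides strong convergence of $n$, hence of $p=n^\gamma$, in $L^2(\ot)$, and of $d$ in a similar space; weak convergence of gradients handles the diffusive terms, while continuity of $G,K_1,K_2,\psi$ together with $L^\infty$-bounds takes care of the reaction terms. The main obstacle is the nonlinear convective term $n_i\nabla p_\ep$: one needs strong compactness of each $n_i$ individually, not merely of their sum. To handle this I would use the factorization $n_i\nabla p = \tfrac{\gamma}{\gamma+1}(n_i/n)\nabla n^{\gamma+1}$ on $\{n>0\}$, coupling weak convergence of $\nabla n^{\gamma+1}$ (from the energy estimate) with strong convergence of the bounded ratios $n_i/n$, which can be extracted from the transport-type equation they satisfy along the velocity $-\nabla p_\ep$. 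Alternatively, strong compactness of $n_1-n_2$ can be obtained directly by testing its equation with a smoothed sign function and exploiting the reaction structure. Once strong convergence of each $n_i$ is established, verification of the three integral identities in (D2) and recovery of (D3) from the trace of $d$ are routine.
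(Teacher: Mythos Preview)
Your overall three-step strategy (regularize with artificial viscosity, derive uniform estimates, pass to the limit) matches the paper's, and your a priori bounds (nonnegativity, $L^\infty$ control, the energy estimate on $n^{(\gamma+1)/2}$) are essentially right. The genuine gap is in your treatment of the convective term $n_i\nabla p$ in the limit passage.

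You write the product as $\tfrac{\gamma}{\gamma+1}(n_i/n)\nabla n^{\gamma+1}$ and propose to pair \emph{weak} convergence of $\nabla n^{\gamma+1}$ with \emph{strong} convergence of the ratios $n_i/n$, the latter to be extracted from the transport equation for $c_i=n_i/n$. But that transport equation has velocity field $\nabla p$, which is only in $L^2$; this is far below the regularity needed for DiPerna--Lions or renormalization techniques to deliver strong compactness of $c_i$. Your alternative---testing the equation for $n_1-n_2$ with a smoothed sign---yields $L^1$-type estimates, not compactness. Neither route closes.

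The paper reverses the roles: it proves \emph{strong} convergence of $\nabla(n^{(\ep)})^{\gamma+1}$ in $L^2(0,T;L^2(\Omega))$ and pairs it with only \emph{weak}$^*$ convergence of the bounded ratios $\eta_i^{(\ep)}=n_i^{(\ep)}/n^{(\ep)}$. To make the ratios well defined, the initial datum $n_{01}$ is shifted by $\ep$ so that a comparison argument gives $n^{(\ep)}\ge \ep e^{-M_0T}$ (your regularization $p_\ep=(n+\ep)^\gamma$ does not obviously yield this). The strong gradient convergence is the nontrivial step: one tests the $n$-equation with $\partial_t\bigl[(n^{(\ep)})^{\gamma+1}+\tfrac{\ep(\gamma+1)}{\gamma}n^{(\ep)}\bigr]$ to obtain $\partial_t(n^{(\ep)})^{(\gamma+2)/2}\in L^2(\Omega_T)$ and $(n^{(\ep)})^{\gamma+1}\in L^\infty(0,T;W^{1,2})$ uniformly in $\ep$; then one subtracts the limit equation and tests with $(n^{(\ep)})^{\gamma+1}-n^{\gamma+1}$, showing the cross terms vanish. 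Finally, the identification $\eta_i n = n_i$ is obtained by a separate limiting argument with $(n^{(\ep)}-\delta)^+$, not by strong convergence of the ratios. This strong-gradient lemma is the key missing ingredient in your proposal.
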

	Set
	\begin{eqnarray}
			L&=&\max\{\|d_b\|_{\infty,\Sigma_T},\|d_0\|_{\infty,\Omega}, d_0 \},\label{cl}\\
				G_0&=&\max_{s\in[0,L]} G(s).\label{gz}
	\end{eqnarray}

\begin{thm}\label{mth1}Let the assumptions of Theorem \ref{mth} hold. Assume:
	\begin{enumerate}
		\item[\textup{(H5)}] $G^\prime(s)$ is bounded;
		\item[\textup{(H6)}] $d_b\in W^{1,s}(\ot)$ for some $s>N+2$ and $d_0\in W^{1,\infty}(\Omega)$;
	\item[\textup{(H7)}] $\left|\left\{n_0(x)\geq \sigma\right\}\right|\leq \frac{1}{e^{G_0T}\|n_0\|_{\infty,\Omega}}|\Omega|$ for some $\sigma\in \left(0, e^{-G_0T}\right)$;
		\item[\textup{(H8)}] $\po$ is $C^{1,1}$.
	\end{enumerate}
Denote by $(\nng, \nog,\ntg,\dg)$ the solution obtained in Theorem \ref{mth}. 
Then there is a subsequence of $(\nng, \nog,\ntg,\dg)$, which will not be relabeled, such that
\begin{eqnarray}
(\nng, \nog,\ntg)&\ra&(n^{(\infty)},\no^{(\infty)},\nt^{(\infty)})\ \ \mbox{weak$^*$ in $\left(L^\infty(\ot)\right)^3$}\nonumber\\
&& \mbox{ and strongly in $\left(C([\tau,T]; \left(W^{1,2}(\Omega)\right)^*)\right)^3$ for each $\tau\in(0,T)$},\label{nwk}\\
	\vgm&\ra& v^{(\infty)}\ \ \mbox{weakly in $L^2(\tau, T;W^{1,2}(\Omega))$ for each $\tau\in (0,T)$},\label{vweak}\\
	\nabla\vgm&\ra&\nabla v^{(\infty)}\ \ \mbox{strongly in $L^2(\tau, T;(L^{2}(\Omega))^N)$ for each $\tau\in (0,T)$},\label{vstro}\\
			\frac{	\ntg}{\nng}&\ra&\eta^{(\infty)}\ \ \mbox{weak$^*$ in $L^\infty(\ot)$},\label{ntw1}\\
				\dg&\ra&d^{(\infty)}\ \ \mbox{weak$^*$ in $L^\infty(0,T; W^{1,\infty}(\Omega))$ and strongly in $L^2(\ot)$}.\label{dstro}
\end{eqnarray}
The limit $(n^{(\infty)}, v^{(\infty)},n_1^{(\infty)},n_2^{(\infty)}, \eta^{(\infty)},d^{(\infty)} )$ satisfies
\begin{eqnarray}
	-\ioT n^{(\infty)}\pt\xi_1 dxdt+\ioT\nabla  v^{(\infty)}\cdot\nabla\xi_1 dxdt&=& \ioT R^{(\infty)}\xi_1 dxdt,\nonumber\\
	-\ioT n_1^{(\infty)}\partial_t\xi_2 dxdt+\ioT \left(1-\eta^{(\infty)}\right)\nabla  v^{(\infty)}\cdot\nabla\xi_2 dxdt
	&=&\ioT R_1^{(\infty)}\xi_2 dxdt,\nonumber\\
	-\ioT\ut^{(\infty)}\partial_t\xi_3 dxdt+\ioT \eta^{(\infty)} \nabla v^{(\infty)}\cdot\nabla\xi_3 dxdt
	&=&\ioT R_2^{(\infty)}\xi_3 dxdt,\ \ \mbox{and}\nonumber\\
	-b\ioT d^{(\infty)}\partial_t\xi_4 dxdt+\ioT\nabla d^{(\infty)}\cdot\nabla\xi_4 dxdt
	&=&\ioT(-\psi(d^{(\infty)})n^{(\infty)}+a\nt^{(\infty)})\xi_4 dxdt\nonumber\\
	&&-b\langle d^{(\infty)}(\cdot, T), \xi_4(\cdot, T)\rangle\nonumber\\
	&&+b\io d_0(x)\xi_4(x, 0)dx \nonumber
\end{eqnarray}
for each $(\xi_1,\xi_2,\xi_3)\in \left(H^1(0,T; W^{1,2}(\Omega))\right)^3$ with $(\xi_1,\xi_2,\xi_3)=0$ near $t=0$ and $\left.(\xi_1,\xi_2,\xi_3)\right|_{t=T}=0$ and each $\xi_4\in H^1(0,T; W^{1,2}_0(\Omega))$, where $R^{(\infty)}$ is given as in \eqref{re4} and
\begin{eqnarray}
	R_1^{(\infty)}&=&G(d^{(\infty)})\no^{(\infty)}-K_1(d^{(\infty)})\no^{(\infty)}+K_2(d^{(\infty)})\nt^{(\infty)},\nonumber\\
		R_2^{(\infty)}&=&\left(G(d^{(\infty)})-D\right)\nt^{(\infty)}+K_1(d^{(\infty)})\no^{(\infty)}-K_2(d^{(\infty)})\nt^{(\infty)}.\nonumber
\end{eqnarray}
Moreover, \eqref{re3} holds and
\begin{equation}\label{com}
	v^{(\infty)}\left(\Delta 	v^{(\infty)}+R^{(\infty)} \right)=0.
\end{equation}
\end{thm}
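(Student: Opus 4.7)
The plan is to follow the road map for Hele-Shaw type incompressible limits (compare \cite{PQV, BPPS, DP}), with the key estimate stated just above Definition \ref{defs} supplying the new compactness ingredient. First, I collect the uniform bounds: (i) $L^\infty(\ot)$ bounds on $\nog,\ntg,\nng,\dg$ via a maximum-principle argument applied to \eqref{hope12} and \eqref{ed} together with the definitions \eqref{cl}, \eqref{gz}, giving in particular $\|\nng\|_{\infty}\leq e^{G_0T}\|n_0\|_\infty$; (ii) the key estimate $\|\vgm\|_{L^2(\tau,T;W^{1,2}(\Omega))}^2\leq c/\tau$; (iii) a uniform $L^2(0,T;(W^{1,2}(\Omega))^*)$ bound on $\pt\nng$ read off from the weak formulation; (iv) a uniform $L^\infty(0,T;W^{1,\infty}(\Omega))$ bound on $\dg$ obtained by applying parabolic $L^p$ and Calder\'on--Zygmund theory to \eqref{ed} (right-hand side bounded in $L^\infty$) under the regularity hypotheses (H6), (H8).

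Combining these bounds with Banach--Alaoglu and the Aubin--Lions lemma (applied on each subinterval $[\tau,T]$) supplies, along a subsequence, the convergences \eqref{nwk}, \eqref{vweak}, \eqref{ntw1}, and \eqref{dstro}; strong $L^2(\ot)$ convergence of $\dg$ comes from Aubin--Lions applied to \eqref{ed}, while the ratio $\ntg/\nng\in[0,1]$ makes \eqref{ntw1} immediate.

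The principal obstacle is the strong convergence \eqref{vstro} of $\nabla\vgm$, which is required to pass to the limit in the fluxes $(\nog/\nng)\nabla\vgm$ and $(\ntg/\nng)\nabla\vgm$ in the individual cell equations, as those are otherwise products of only weakly (respectively weak$^*$) convergent factors. I derive \eqref{vstro} from the energy identity produced by testing \eqref{hope12} against $\vgm$ on $(\tau,T)\times\Omega$ (using the homogeneous Neumann condition for $\vgm$):
\begin{equation*}
\frac{1}{\gamma+2}\io(\nng)^{\gamma+2}\,dx\,\bigg|_{t=\tau}^{t=T}+\frac{\gamma}{\gamma+1}\int_\tau^T\io|\nabla\vgm|^2\,dx\,dt=\int_\tau^T\io R^{(\gamma)}\vgm\,dx\,dt.
\end{equation*}
The prefactor $1/(\gamma+2)$ forces the time-boundary terms to vanish in the limit: $\io(\nng)^{\gamma+2}\leq\|\nng\|_\infty\io\vgm$, and $\io\vgm(\cdot,t)$ is uniformly controlled at almost every $t$ by the key estimate (after a possible choice of Lebesgue points for $\tau,T$). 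Passing to the limit on the right-hand side uses the strong convergence of $\nng,\ntg$ in $C([\tau,T];(W^{1,2}(\Omega))^*)$ and of $\dg$ in $L^2(\ot)$, paired against $\vgm$ weakly in $L^2(\tau,T;W^{1,2}(\Omega))$ via duality (here (H5) makes $G(d^{(\infty)})$ a bounded Lipschitz multiplier, allowing splitting $R^{(\gamma)}\vgm$ into strong$\times$strong and strong$\times$weak pairings). This yields $\lim_\gamma\int_\tau^T\io|\nabla\vgm|^2=\int_\tau^T\io v^{(\infty)}R^{(\infty)}\,dx\,dt$. The limit equation \eqref{re4}, formally tested by $v^{(\infty)}$ and combined with the graph relation (proved next) so that $v^{(\infty)}\pt n^{(\infty)}=0$ distributionally, produces the same value, and convergence of norms plus weak convergence forces \eqref{vstro}.

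Given \eqref{vstro}, the passage to the limit in the weak formulations of the $n$-, $\uo$-, $\ut$-, and $d$-equations is routine. For the graph condition \eqref{re3}, the uniform $L^1(\ot)$ bound on $\vgm=(\nng)^{\gamma+1}$ combined with Chebyshev's inequality gives $|\{\nng>1+\epsilon\}|\leq (1+\epsilon)^{-(\gamma+1)}\|\vgm\|_{L^1(\ot)}\to 0$ for every $\epsilon>0$, so $n^{(\infty)}\leq 1$ a.e.~on $\ot$; passing to the limit in the algebraic identity $\vgm(\nng-1)=(\nng)^{\gamma+2}-\vgm$ using the already established strong convergences yields $v^{(\infty)}(n^{(\infty)}-1)=0$, which together with $n^{(\infty)}\leq 1$ and $v^{(\infty)}\geq 0$ is exactly \eqref{re3}. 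Finally, for the complementarity \eqref{com}, observe that on the set $\{v^{(\infty)}>0\}$ the graph condition forces $n^{(\infty)}\equiv 1$, so $\pt n^{(\infty)}=0$ distributionally there; \eqref{re4} then yields $\Delta v^{(\infty)}+R^{(\infty)}=0$ on $\{v^{(\infty)}>0\}$, while on $\{v^{(\infty)}=0\}$ the product in \eqref{com} trivially vanishes. The main technical risk of the program lies in making rigorous the time-boundary vanishing and the duality-based passage to the limit in $\int R^{(\gamma)}\vgm$ at the sharp regularity available, but these are surmountable with a careful mollification/truncation argument.
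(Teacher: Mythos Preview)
Your overall strategy is sound and parallels the paper, but the argument for \eqref{vstro} has a genuine gap at the step ``$v^{(\infty)}\partial_t n^{(\infty)}=0$ distributionally'' (equivalently $\int_\tau^T\langle\partial_t n^{(\infty)},v^{(\infty)}\rangle\,dt=0$). Knowing that $n^{(\infty)}=1$ a.e.\ on $\{v^{(\infty)}>0\}$ does \emph{not} justify this: $\partial_t n^{(\infty)}$ lives only in $L^2(\tau,T;(W^{1,2}(\Omega))^*)$, the set $\{v^{(\infty)}>0\}$ need not be open, and there is no time regularity available on $v^{(\infty)}$ to run a product-rule or restriction argument (the paper explicitly notes that uniform estimates on $\partial_t\vgm$ seem out of reach). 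The paper resolves this by introducing the convex functional $\Psi^{(\infty)}$ with $\partial\Psi^{(\infty)}=\varphi_\infty$ and proving \emph{directly} the chain rule $\frac{d}{dt}\int_\Omega\Psi^{(\infty)}(n^{(\infty)})=\langle\partial_t n^{(\infty)},v^{(\infty)}\rangle$ (the standard Lions--Magenes lemma is inapplicable since $n^{(\infty)}\notin L^2(\tau,T;W^{1,2})$); since $n^{(\infty)}\le 1$ forces $\Psi^{(\infty)}(n^{(\infty)})\equiv 0$, the desired vanishing follows. This is the missing idea in your plan, and the same heuristic reappears (and fails) in your derivation of \eqref{com}; the paper instead obtains \eqref{com} by multiplying \eqref{gen} by $\vgm$ and passing to the limit using \eqref{vstro}.

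A second, smaller gap: your energy identity on $(\tau,T)$ produces the boundary term $\frac{1}{\gamma+2}\int_\Omega(\nng)^{\gamma+2}\big|_{t=\tau}^{t=T}$, and you claim this vanishes because $\int_\Omega\vgm(\cdot,t)$ is ``uniformly controlled at almost every $t$''. The key estimate is only an $L^2$-in-time bound, so the Lebesgue set where $\int_\Omega\vgm(\cdot,t)\le M$ depends on $\gamma$ and does not give a bound uniform in $\gamma$ at a fixed $t$. The paper closes this by first testing \eqref{gen} with $t^2\partial_t\vgm$ to obtain the sup-in-time estimate $\sup_{t}\int_\Omega t^2|\nabla\vgm|^2\le c$ (and hence $\sup_t\int_\Omega t^2(\vgm)^2\le c$); the hypothesis (H5) is used precisely here, to integrate by parts in time the term $\int G(\dg)\nng\,\partial_t\vgm$. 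You have correctly flagged ``time-boundary vanishing'' as a risk, but the actual fix requires this additional a priori estimate, not just mollification.
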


If we compare the equations in (D2) with the ones here, two pieces are missing. One is that we are no longer able to identify the initial conditions for $(n^{(\infty)}, n_1^{(\infty)},n_2^{(\infty)})$. This is to be expected due to the fact that $\vp_{\infty}$ is not defined on the set $\{n_0>1\} $. A redeeming feature is that we can view  \eqref{com}, the so-called complementary condition, as some kind of compensation for this lack of initial conditions. More significantly, this condition connects our limits to
the geometric form of the Hele-Shaw problem \cite{DP}. At least formally, it says 
$$-\Delta 	v^{(\infty)}=R^{(\infty)}\ \ \mbox{ on $\Omega(t)\equiv\{v^{(\infty)}(x,t)>0\}$.}$$
The second one is that we have not been able to show
\begin{equation}\label{rev1}
	\eta^{(\infty)}=\frac{\nt^{(\infty)}}{n^{(\infty)}}.
\end{equation}
This can be derived from the precompactness of  $\{\nng\}$ in some $L^q(\ot)$ space with  $q\in [1, \infty)$ (see the proof of \eqref{ff} in Section 2 below). Unfortunately, this result is not available to us because in the generality considered here the sequence $\{\nabla\nng\}$ cannot be shown to be bounded in a function space. 
Furthermore, it does not seem to be possible to obtain any estimates on $\pt\vgm$ that are uniform in $\gamma$. As a result, the precompactness of  $\{\vgm\}$ in some $L^q(\ot)$ space is also an issue. This is so in spite of the fact that we have \eqref{vstro}.

 We can easily see that \eqref{re3} is equivalent to the following
\begin{eqnarray}
	n^{(\infty)}&\leq& 1 \ \ \mbox{on $\ot$ and}\label{sy1}\\
	\left(1-	n^{(\infty)}\right)	v^{(\infty)}&=& 0\ \ \mbox{ on $\ot$.}
	\label{sy4}
\end{eqnarray} Obviously, we can no longer expect $n^{(\infty)}$ to be independent of $t$ due to the presence of $R^{(\infty)}$.  The term $\Delta 	v^{(\infty)}$ may be a pure distribution. We define
$$	v^{(\infty)}\Delta 	v^{(\infty)}=\mdiv\left(	v^{(\infty)}\nabla 	v^{(\infty)}\right)-\left|\nabla v^{(\infty)}\right|^2\ \ \mbox{in the sense of distributions.}$$
 Also note that the assumption (H7) implies that $n_0$ is close to $0$ on a large set. The smaller $T$ is, the easier it is for (H7) to hold.

The remainder of the paper is devoted to the proof of the above two theorems. To be specific, Section 2 contains the proof of Theorem \ref{mth}, while Theorem \ref{mth1} is established in Section 3.
\section{Existence of a global weak solution and Proof of Theorem \ref{mth}}
The proof will be divided into several lemmas. Before we begin, we state the following three well known results.  
\begin{lem}\label{comt}Let $h(s)$ be a convex and lower semi-continuous function on $\mathbb{R}$ \cite{H}.
	Assume that
	\begin{enumerate}
		\item[\textup{(C1)}] $f\in W_2(0,T)\equiv\left\{\vp\in L^2(0,T; W^{1,2}(\Omega)): \partial_t \vp\in L^2\left(0,T; \left(W^{1,2}(\Omega)\right)^*\right)\right\} $;
		\item[\textup{(C2)}]
		$g\in L^2(0,T;W^{1,2}(\Omega))$ with the property $g(x,t)\in\partial h(f(x,t))$ for a.e $(x,t)\in \ot$, where $\partial h$ is the subgradient of $h$.
	\end{enumerate}
	Then 
	the function $t\mapsto \io h(f(x,t))dx$ is absolutely continuous on $[0,T]$ and
	\begin{equation}\label{lm}
	\frac{d}{dt}\io h (f)dx=\left\langle\partial_tf,g\right\rangle.
	\end{equation}
	\end{lem}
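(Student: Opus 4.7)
The plan is to prove the identity by combining the subgradient inequality with a time-mollification of $f$ and $g$, which is the standard device for turning a pairing between $\partial_t f \in L^2(0,T;(W^{1,2})^*)$ and $g \in L^2(0,T;W^{1,2}(\Omega))$ into a bona fide chain rule. The subdifferential characterization says that for a.e.\ $(x,t)$ and every $s \in \mathbb{R}$,
\[
h(s) - h(f(x,t)) \geq g(x,t)\bigl(s - f(x,t)\bigr),
\]
and inserting $s = f(x,t\pm\delta)$ and reversing the roles of the two time slices yields two-sided difference-quotient estimates that, after integration in $x$ and $t$, squeeze $\int_\Omega h(f(x,t))\,dx$ between quantities whose limits coincide with $\langle \partial_t f, g\rangle$.

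To justify these manipulations rigorously, I would mollify $f$ and $g$ in $t$ by a Friedrichs kernel $\rho_\ve$ (after a suitable reflection across $t=0$ and $t=T$), obtaining $f_\ve, g_\ve \in C^\infty([0,T]; W^{1,2}(\Omega))$ with $f_\ve \to f$ in $W_2(0,T)$ and $g_\ve \to g$ in $L^2(0,T; W^{1,2}(\Omega))$. Simultaneously, I would replace $h$ by its Moreau--Yosida regularization $h_\lambda$, which is convex and $C^{1,1}$ with $h_\lambda \nearrow h$ and $h_\lambda'(s) = (s - J_\lambda s)/\lambda \in \partial h(J_\lambda s)$. For the doubly regularized problem the classical chain rule gives, for any $0 \le t_1 < t_2 \le T$,
\[
\int_\Omega h_\lambda(f_\ve(\cdot,t_2))\,dx - \int_\Omega h_\lambda(f_\ve(\cdot,t_1))\,dx = \int_{t_1}^{t_2}\!\!\int_\Omega h_\lambda'(f_\ve)\,\partial_t f_\ve\,dx\,dt.
\]

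I would then pass to the limit $\ve \to 0$ first, using the $W_2(0,T)$ convergence of $f_\ve$ together with the Lipschitz property of $h_\lambda'$, and afterwards send $\lambda \to 0$. Absolute continuity of $t \mapsto \int_\Omega h(f(\cdot,t))\,dx$ falls out of the resulting integral representation, and \eqref{lm} then follows by Lebesgue differentiation. The main obstacle is the passage $h_\lambda'(f) \leadsto g$ in the limit $\lambda \to 0$: a priori $h_\lambda'(f_\ve)$ converges only weakly in $L^2$, and it is not a priori clear that its weak limit coincides with the given $g \in \partial h(f)$. The key point to push through is a Minty-type argument exploiting the maximal monotonicity of $\partial h$ and the fact that $J_\lambda f \to f$ in $L^2(\ot)$: combined with the uniform bound $\int h_\lambda'(f)(f - J_\lambda f)\,dx\,dt \geq 0$ and the convergence of the left-hand side, this will identify the weak limit with $g$ and close the argument.
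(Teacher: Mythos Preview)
The paper does not prove this lemma: it is quoted as a known result, with the remark that the case $h(s)=s^2$ is the Lions--Magenes lemma, that the smooth case is trivial, and that the general case ``can be established by suitable approximation,'' referring to \cite{H}, p.~101. So there is no detailed argument in the paper to compare against; the relevant benchmark is the standard proof in Haraux, which is precisely the time-shift/subgradient argument you sketch in your \emph{first} paragraph.

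That first approach is correct and complete on its own. Your detour through Moreau--Yosida regularization, however, has a genuine gap. After sending $\ve\to 0$ you are left with the identity
\[
\int_\Omega h_\lambda(f(\cdot,t_2))\,dx-\int_\Omega h_\lambda(f(\cdot,t_1))\,dx=\int_{t_1}^{t_2}\langle\partial_t f,\,h_\lambda'(f)\rangle\,dt,
\]
and you must pass to the limit $\lambda\to 0$ in the pairing on the right. But $h_\lambda'$ is only $\frac{1}{\lambda}$-Lipschitz, so $\nabla h_\lambda'(f)$ carries a factor $\frac{1}{\lambda}$ and $\{h_\lambda'(f)\}$ is \emph{not} uniformly bounded in $L^2(0,T;W^{1,2}(\Omega))$; weak $L^2(\Omega_T)$ convergence alone does not let you pair against $\partial_t f\in L^2(0,T;(W^{1,2}(\Omega))^*)$. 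Moreover, even if you could pass to the limit, a Minty argument only tells you the weak limit lies in $\partial h(f)$ a.e.; when $\partial h$ is genuinely multivalued (say $h(s)=|s|$), $h_\lambda'(f)$ converges to the minimal-norm selection $(\partial h)^0(f)$, which need not be the \emph{given} $g$, so you would end up with the formula for the wrong selection.

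The fix is simply to drop the Moreau--Yosida step and run your first paragraph to the end: the two-sided subgradient inequality with $s=f(x,t\pm\delta)$ already involves the prescribed $g$, integrating in $x$ and $t$ and using that the Steklov averages $(f(\cdot,t+\delta)-f(\cdot,t))/\delta\to\partial_t f$ in $L^2(0,T;(W^{1,2}(\Omega))^*)$ against $g\in L^2(0,T;W^{1,2}(\Omega))$ yields \eqref{lm} directly, together with the absolute continuity of $t\mapsto\int_\Omega h(f(\cdot,t))\,dx$. This is the approximation alluded to in the paper and carried out in \cite{H}.
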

If $h(s)=s^2$, this lemma is a special case of the well known Lions-Magenes lemma (\cite{T}, p.176–177).
Formula \eqref{lm} is trivial if $f$ is smooth. The general case can be established by suitable approximation. See (\cite{H}, p. 101) for the details.
\begin{lem}[Lions-Aubin]\label{la}
	Let $X_0, X$ and $X_1$ be three Banach spaces with $X_0 \subseteq X \subseteq X_1$. Suppose that $X_0$ is compactly embedded in $X$ and that $X$ is continuously embedded in $X_1$. For $1 \leq p, q \leq \infty$, let
	\begin{equation*}
	W_{p,q}(0,T)=\{u\in L^{p}([0,T];X_{0}): \partial_t u\in L^{q}([0,T];X_{1})\}.
	\end{equation*}
	Then:
	\begin{enumerate}
		\item[\textup{(i)}] If $p  < \infty$, then the embedding of $W_{p,q}(0,T)$ into $L^p([0, T]; X)$ is compact.
		\item[\textup{(ii)}] If $p  = \infty$ and $q  >  1$, then the embedding of $W_{p,q}(0,T)$ into $C([0, T]; X)$ is compact. 
	\end{enumerate}
\end{lem}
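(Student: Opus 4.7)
The plan is to prove this classical Aubin-Lions compactness lemma by combining Ehrling's interpolation inequality with a time-equicontinuity estimate that comes from the derivative bound. First I would establish Ehrling's inequality: for every $\eta>0$ there exists $C_\eta>0$ with
\[
\|v\|_X\le\eta\,\|v\|_{X_0}+C_\eta\,\|v\|_{X_1},\qquad v\in X_0.
\]
This is proved by contradiction: a hypothetical violating sequence, normalized so that $\|v_k\|_X=1$, would be bounded in $X_0$, hence possess an $X$-convergent subsequence whose limit has zero $X_1$-norm; since $X\hookrightarrow X_1$ is continuous and injective, the limit vanishes in $X$, contradicting $\|v_k\|_X=1$.

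For part (i), let $\{u_n\}$ be bounded in $W_{p,q}(0,T)$. From $u_n(t+h)-u_n(t)=\int_t^{t+h}\partial_\tau u_n\,d\tau$ and H\"older in $\tau$, I get
\[
\|u_n(\cdot+h)-u_n(\cdot)\|_{L^p(0,T-h;X_1)}\le C\,h^{1-1/q},
\]
uniformly in $n$ when $q>1$; the $q=1$ case reduces to uniform absolute continuity of the integrals of $\|\partial_t u_n\|_{X_1}$ via truncation. Composing the compact embedding $X_0\hookrightarrow X$ with the continuous $X\hookrightarrow X_1$ yields a compact $X_0\hookrightarrow X_1$, so the uniform bound of $\{u_n\}$ in $L^p(0,T;X_0)$ together with the above time-translation estimate triggers the vector-valued Kolmogorov-Riesz-Fr\'echet criterion, producing a subsequence, still denoted $\{u_n\}$, convergent in $L^p(0,T;X_1)$. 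Applying Ehrling pointwise in $t$ and integrating,
\[
\|u_n-u_m\|_{L^p(0,T;X)}\le 2\eta M+C_\eta\,\|u_n-u_m\|_{L^p(0,T;X_1)},
\]
where $M$ bounds $\|u_n\|_{L^p(0,T;X_0)}$. Sending $n,m\to\infty$ and then $\eta\to 0$ shows the subsequence is Cauchy in $L^p(0,T;X)$.

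For part (ii), with $p=\infty$ and $q>1$, the same identity gives $\|u_n(t+h)-u_n(t)\|_{X_1}\le h^{1-1/q}\|\partial_t u_n\|_{L^q(0,T;X_1)}$, so after modifying on a null set each $u_n$ is H\"older continuous from $[0,T]$ into $X_1$, uniformly in $n$. Combining the $L^\infty(0,T;X_0)$ bound with the $X_1$-continuous representative (via averaging over shrinking intervals) gives boundedness of $\{u_n(t)\}$ in $X_0$ for every $t$, hence relative compactness in $X$. Ehrling applied to the increment $u_n(t+h)-u_n(t)$ upgrades $X_1$-equicontinuity to $X$-equicontinuity: for any $\varepsilon>0$, pick $\eta$ with $2\eta M<\varepsilon/2$ and then $h_0$ with $C_\eta h_0^{1-1/q}<\varepsilon/2$. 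The vector-valued Arzel\`a-Ascoli theorem now extracts a subsequence convergent in $C([0,T];X)$.

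The main obstacle is the bookkeeping around two standard but nontrivial ingredients: the vector-valued Kolmogorov-Riesz-Fr\'echet compactness criterion that underpins part (i), and, in part (ii), the promotion of the a.e. $X_0$-bound to an everywhere $X_0$-bound, which is routine when $X_0$ is reflexive but requires careful selection of representatives within the $X_1$-continuity class otherwise.
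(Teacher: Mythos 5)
The paper itself does not prove this lemma; it is quoted from Simon's paper \cite{S}, so your argument should be measured against the standard Aubin--Lions--Simon proof. Your outline (Ehrling's inequality, a time-translation estimate in $X_1$ coming from the derivative bound, a compactness criterion in $L^p(0,T;X_1)$ upgraded to $L^p(0,T;X)$ by Ehrling, and Arzel\`a--Ascoli for $p=\infty$) is exactly that standard route, and for $q>1$ it is essentially correct. The reflexivity worry you raise in (ii) is also harmless without reflexivity: the $X_0$-ball of radius $M$ has $X$-closure compact in $X$, hence compact and therefore closed in $X_1$, so the $X_1$-continuous representative takes every value $u_n(t)$ in a fixed compact subset of $X$, which is all Arzel\`a--Ascoli needs.

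The genuine gap is your treatment of $q=1$ in part (i). Boundedness of $\{\partial_t u_n\}$ in $L^1(0,T;X_1)$ does \emph{not} give uniform absolute continuity of the integrals of $\|\partial_t u_n\|_{X_1}$ (mass may concentrate, as for $\partial_t u_n = n\,\chi_{[0,1/n]}\,v$), and truncation cannot restore it, so the step you propose fails. The statement is nevertheless true for $q=1$, and the correct repair is elementary: with $g_{n,h}(t)=\int_t^{t+h}\|\partial_t u_n\|_{X_1}\,ds$ one has, by Fubini, $\|g_{n,h}\|_{L^1(0,T-h)}\le h\,\|\partial_t u_n\|_{L^1(0,T;X_1)}$ and trivially $\|g_{n,h}\|_{L^\infty(0,T-h)}\le \|\partial_t u_n\|_{L^1(0,T;X_1)}$, whence
\begin{equation*}
\|u_n(\cdot+h)-u_n(\cdot)\|_{L^p(0,T-h;X_1)}\le \|g_{n,h}\|_{L^1}^{1/p}\,\|g_{n,h}\|_{L^\infty}^{1-1/p}\le h^{1/p}\,\|\partial_t u_n\|_{L^1(0,T;X_1)},
\end{equation*}
which is the uniform translation smallness needed for your Kolmogorov--Riesz/Simon criterion. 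This case is not a side issue for the present paper: Lemma \ref{l322} applies part (i) precisely in a situation where the time derivatives are only bounded in $L^2\left(0,T;\left(W^{1,2}(\Omega)\right)^*\right)+L^1(\ot)$, i.e.\ the borderline integrability regime, so the $q=1$ mechanism (and in fact its extension to sum spaces) is the part that actually gets used.
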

The proof of this lemma can be found in \cite{S}. We mention in passing that Lemmas \ref{comt} and \ref{la} imply
that $W_2(0,T)$ is contained in $C([0,T]; L^2(\Omega))$. 
\begin{lem}\label{poin}
	Let $\Omega$ be a bounded domain in $\RN$ with Lipschitz boundary and $1\leq p<N$.
	Then there is a positive number $c=c(N)$ such that
	\begin{equation}
		\|u-u_S\|_{p^*}\leq \frac{cd^{N+1-\frac{p}{N}}}{|S|^{\frac{1}{p}}}\|\nabla u\|_p \ \ \mbox{for each $u\in W^{1,p}(\Omega)$,}\nonumber
	\end{equation}
	where $S$ is any measurable subset of $\Omega$ with $|S|>0$, $u_S=\frac{1}{|S|}\int_S udx$, and $d$ is the diameter of $\Omega$.
\end{lem}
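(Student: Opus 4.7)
My strategy is to reduce the inequality to the classical Sobolev--Poincar\'e inequality on $\Omega$ (with the mean $u_\Omega\equiv\frac{1}{|\Omega|}\int_\Omega u\,dx$) and then to control the discrepancy between $u_\Omega$ and $u_S$ by an elementary H\"older argument.

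First, I would invoke the standard Sobolev--Poincar\'e inequality on the bounded Lipschitz domain $\Omega$: for $1\le p<N$ and $p^*=Np/(N-p)$ one has
\[
\|u-u_\Omega\|_{p^*}\le C_0\,d\,\|\nabla u\|_p,
\]
where $C_0$ depends only on $N$ and the Lipschitz character of $\partial\Omega$. The explicit factor of $d$ is obtained by rescaling $\Omega$ to a reference domain of unit diameter and tracking the scaling of $\|u\|_{p^*}$ and $\|\nabla u\|_p$ under $x\mapsto x/d$.

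Second, since $u_\Omega - u_S$ is a constant on $\Omega$, the triangle inequality in $L^{p^*}(\Omega)$ gives
\[
\|u-u_S\|_{p^*}\le \|u-u_\Omega\|_{p^*}+|u_\Omega-u_S|\,|\Omega|^{1/p^*}.
\]
To bound $|u_\Omega-u_S|$ I would write $u_\Omega-u_S=\frac{1}{|S|}\int_S(u_\Omega-u)\,dx$ and apply H\"older on $S$ with conjugate exponents $p$ and $p/(p-1)$:
\[
|u_\Omega-u_S|\le |S|^{-1/p}\|u-u_\Omega\|_{p,S}\le |S|^{-1/p}\|u-u_\Omega\|_p,
\]
followed by a second H\"older step on $\Omega$ yielding $\|u-u_\Omega\|_p\le |\Omega|^{1/N}\|u-u_\Omega\|_{p^*}$, via the identity $\tfrac{1}{p}-\tfrac{1}{p^*}=\tfrac{1}{N}$.

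Assembling the three estimates gives a bound of the form
\[
\|u-u_S\|_{p^*}\le C_0\,d\left(1+\frac{|\Omega|^{1/p^*+1/N}}{|S|^{1/p}}\right)\|\nabla u\|_p = C_0\,d\left(1+\left(\tfrac{|\Omega|}{|S|}\right)^{1/p}\right)\|\nabla u\|_p,
\]
after which the bound $|\Omega|\le \omega_N d^N$ reproduces the stated inequality with the claimed power of $d$ in the numerator and $|S|^{1/p}$ in the denominator. The main (and essentially only) subtle point is keeping explicit track of the dependence of the Sobolev--Poincar\'e constant on the diameter $d$ for a general Lipschitz domain; once that is settled by the rescaling argument, the exponent of $d$ in the final bound is assembled mechanically from the two H\"older steps and the volume estimate for $|\Omega|$.
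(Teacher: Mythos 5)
Your route is genuinely different from the paper's: the paper obtains the lemma by citing Lemma 7.16 of \cite{GT}, i.e.\ the pointwise potential representation $|u(x)-u_S|\le \frac{d^N}{N|S|}\int_\Omega |x-y|^{1-N}|\nabla u(y)|\,dy$ (stated there for \emph{convex} domains), combined with the $L^p\to L^{p^*}$ mapping properties of the Riesz potential; this is what keeps the constant purely dimensional. You instead start from the Sobolev--Poincar\'e inequality on $\Omega$ and compare the two means $u_\Omega$ and $u_S$ by H\"older. That comparison step is correct: $|u_\Omega-u_S|\le |S|^{-1/p}\|u-u_\Omega\|_{p}$ and $\|u-u_\Omega\|_p\le|\Omega|^{1/N}\|u-u_\Omega\|_{p^*}$ (using $\tfrac1p-\tfrac1{p^*}=\tfrac1N$), and since $S\subseteq\Omega$ and $|\Omega|\le c(N)d^N$ it yields
\[
\|u-u_S\|_{p^*}\le C_0\left(1+\left(\tfrac{|\Omega|}{|S|}\right)^{1/p}\right)\|\nabla u\|_p\le c\,C_0\,\frac{d^{N/p}}{|S|^{1/p}}\,\|\nabla u\|_p,
\]
which is all that is actually needed in Section 3, where the lemma is applied on the fixed domain $\Omega$ with $|S|\ge(1-\sigma_0)|\Omega|$.

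Two concrete inaccuracies, however, prevent your argument from proving the statement as written. First, the scaling claim is wrong: because $1/p^*=1/p-1/N$, the critical Sobolev--Poincar\'e inequality $\|u-u_\Omega\|_{p^*}\le C_0\|\nabla u\|_p$ is scale-invariant, so rescaling to unit diameter produces \emph{no} factor of $d$ (the factors $d^{N/p^*}$ and $d^{1-N/p}$ cancel exactly); your inserted factor $d$ is spurious, and after absorbing the ``$1+$'' the exponent of $d$ you actually get is $N/p$ (or $1+N/p$ with your extra $d$), neither of which equals the stated $N+1-\tfrac{p}{N}$ — so the final assertion that the claimed power of $d$ is ``assembled mechanically'' does not hold. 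Second, your $C_0$ depends on the Lipschitz character of $\partial\Omega$, not only on $N$, so you obtain a domain-dependent constant rather than the stated $c=c(N)$; the dimension-only constant comes from the potential representation, which in \cite{GT} requires convexity. Neither point affects how the lemma is used in the paper (fixed $\Omega$, fixed $d$, only the $|S|^{-1/p}$ dependence matters), but what you prove is a variant of the lemma, not the literal inequality.
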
	
This lemma can be inferred from Lemma 7.16 in \cite{GT}.

 Our approximate problems are similar to those in \cite{PX}. For each  $\varepsilon>0$, we consider
\begin{eqnarray}
\partial_tn-\varepsilon\Delta n&=&\gamma\mdiv\left( n^{\gamma}\nabla n\right)+G(d)\no+(G(d)-D)\nt\ \ \mbox{in $\ot$,}\label{aew}\\
\partial_t\uo-\varepsilon\Delta \uo&=&\gamma\mdiv\left(\uo n^{\gamma-1}\nabla n\right)+G(d)\no-K_1(d)\no\nonumber\\
&&+K_2(d)\nt\ \mbox{in $\ot$,}\label{aeuo}\\
\partial_t\ut-\varepsilon\Delta \ut&=&\gamma\mdiv\left(\ut  n^{\gamma-1}\nabla n\right)+(G(d)-D)\nt+K_1(d)\no\nonumber\\
&&-K_2(d)\nt \ \mbox{in $\ot$,}\label{aeut}\\
b \pt d-\Delta d&=& -\psi(d)n+a\nt\ \mbox{in $\ot$,}\label{aedt}\\
\nabla n\cdot\mathbf{n}=\nabla \uo\cdot\mathbf{n}&=&\nabla \ut\cdot\mathbf{n}=0\ \ \mbox{on $\Sigma_T$,}\label{autb}\\
d&=&d_b\ \ \mbox{on $\Sigma_T$,}\label{adb}\\
\left.\left(n,\uo,\ut,d\right)\right|_{t=0}&=&\left(n_0(x),n_{01}(x),n_{02}(x),d_0(x) \right)\ \mbox{on $\Omega$.}\label{auicon}
\end{eqnarray}

\begin{lem}\label{athm} Assume that \textup{(H1)}-\textup{(H4)} hold.
	Then for each fixed $\ve>0$ there exists a quadruplet $(n, \uo, \ut, d)$ in the function space $\left(W_2(0,T)\right)^4\cap \left(L^\infty(\ot)\right)^4$ such that \eqref{aew}-\eqref{auicon} are all satisfied in the sense of Definition \ref{defs}.
\end{lem}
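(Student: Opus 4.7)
The plan is to construct the $\ve$-approximate solution via Schauder's fixed-point theorem applied to a linearized version of \eqref{aew}--\eqref{auicon}, following the strategy of \cite{PX}. The $\ve\Delta$ regularization makes every linearized subproblem uniformly parabolic, so standard $W_2(0,T)$ theory gives unique weak solutions and Lemma \ref{la} yields compactness.

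\textbf{A priori $L^\infty$ bounds.} I would first establish the pointwise bounds satisfied by any solution, since they define the target convex set for the fixed-point map. Testing \eqref{aedt} with $d^-$ and using $\psi(0)=0$ gives $d\geq 0$; on $\{d>L\}$ the monotonicity of $\psi$ and the choice of $L$ yield $\psi(d)\geq \psi(d_0)=a$, so that $-\psi(d)n+a\nt\leq \psi(d)(\nt-n)=-\psi(d)\no\leq 0$, and testing with $(d-L)^+$ gives $d\leq L$. The reaction of the $\no,\nt$ subsystem is quasi-positive (the source of the $\no$-equation at $\no=0$ reduces to $K_2(d)\nt\geq 0$, and symmetrically for $\nt$), so $\no,\nt\geq 0$. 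The function $N:=n-\no-\nt$ satisfies a homogeneous linear Neumann problem with zero initial datum, hence $n=\no+\nt$ throughout $\ot$. The source of \eqref{aew} equals $G(d)n-D\nt\leq G_0 n$, and the maximum principle applied to \eqref{aew} delivers
\begin{equation*}
    0\leq n(x,t)\leq M:=\|n_0\|_{\infty,\Omega}\,e^{G_0 T}.
\end{equation*}

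\textbf{Fixed-point scheme.} Let $\Phi$ denote the map sending $(\bar n,\bar\no,\bar\nt,\bar d)$ to the solution $(n,\no,\nt,d)$ of the linearized system in which the coefficients $\bar n^\gamma,\nabla\bar n^\gamma$ and the reactions involving $\bar d,\bar\no,\bar\nt$ are frozen and the unknowns $n,\no,\nt,d$ enter linearly. Working on the convex subset of $(L^2(0,T;W^{1,2}(\Omega))\cap L^\infty(\ot))^4$ respecting the $L^\infty$ bounds above (together with a uniform $L^2(0,T;W^{1,2}(\Omega))$ bound extracted from parabolic energy estimates testing each linearized equation with the corresponding unknown), one verifies that $\Phi$ maps this set into itself by the a priori arguments above, adapted to the linearized setting. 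Lemma \ref{la} produces relative compactness of $\Phi$ in $L^2(\ot)^4$, and continuity in the $L^2(\ot)^4$ topology follows from standard stability of uniformly parabolic linear equations. Schauder's theorem furnishes a fixed point; the identity $n=\no+\nt$ at the fixed point follows from uniqueness for the equation satisfied by $N$, yielding the required weak solution.

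\textbf{Main obstacle.} The key difficulty is the continuity of $\Phi$, since the cross-diffusion flux $\no\nabla n^\gamma$ requires convergence of $\nabla\bar n$ under the iteration, which does not follow from strong convergence of $\bar n$ in $L^2(\ot)$ alone. This is handled by extracting weak convergence of $\nabla\bar n$ in $L^2(\ot)^N$ from the uniform $L^2(0,T;W^{1,2}(\Omega))$ bound, combined with strong $L^2(\ot)$ convergence of $\bar n$ (from Lemma \ref{la}) and the $L^\infty$ bound. Products of the form $\bar n^{\gamma-1}\nabla\bar n=\frac{1}{\gamma}\nabla\bar n^\gamma$ then converge weakly in $L^2(\ot)^N$, which is sufficient to pass to the limit in the weak formulations of the linearized equations since the test functions provide the paired strong convergence. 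Once continuity is in hand, Schauder's theorem completes the argument.
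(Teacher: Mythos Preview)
Your overall architecture is close to the paper's, but there is a real gap in the step ``$\Phi$ maps this set into itself by the a priori arguments above, adapted to the linearized setting.'' The sign and $L^\infty$ bounds you derive in the first paragraph use the full nonlinear coupling: e.g.\ $d\ge 0$ relies on $\psi(d)=\psi(0)=0$ on $\{d<0\}$, and $n_1\ge 0$ relies on the source reducing to $K_2(d)n_2$ at $n_1=0$. Once you freeze the reactions at $(\bar d,\bar n_1,\bar n_2)$ so that the unknowns enter only linearly, these mechanisms disappear: the source in the linearized $d$-equation is a fixed function independent of $d$, and there is no reason it should be $\le 0$ on $\{d<0\}$; likewise the frozen reaction in the $n_1$-equation need not vanish on $\{n_1\le 0\}$. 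So you cannot conclude that $\Phi$ preserves the box $0\le d\le L$, $n_i\ge 0$, $n\le M$, and Schauder on that convex set is not justified as stated.

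The paper handles exactly this point by introducing a cut-off $\theta_\ell$ on all the frozen quantities and working on the whole of $(L^2(\Omega_T))^4$ with the Leray--Schauder/Schauder theorem for compact maps: no invariant set is needed, only continuity and precompactness of the range, both of which follow from uniform parabolicity (the $\ve\Delta$ term) and the boundedness guaranteed by $\theta_\ell$. The a priori bounds are then derived \emph{after} obtaining a fixed point of the truncated system, where the truncations $\theta_\ell(n_i),\theta_\ell(d)$ retain enough of the nonlinear structure (e.g.\ $\theta_\ell(n_1)=0$ on $\{n_1\le 0\}$) for the sign arguments to go through; finally $\ell$ is chosen large enough that $\theta_\ell$ acts as the identity on the fixed point. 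A second, related difference: the paper solves the linearized $n$-equation first and then feeds the resulting $\nabla n$ (not a frozen $\nabla\bar n$) into the $n_1,n_2$ equations. This sequential structure, together with the truncation, makes continuity of the map a routine consequence of linear parabolic theory and removes the ``main obstacle'' you identify.
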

\begin{proof}This lemma
will be established via the Leray-Schauder fixed point theorem (\cite{GT}, p.280). For this purpose,
we introduce a cut-off function
\begin{equation}\label{thdf}
\theta_\ell(s)=\left\{\begin{array}{cc}
0&\mbox{if $s\leq 0$,}\\
s&\mbox{if $0<s<\ell$,}\\
\ell&\mbox{if $s\geq \ell$,}
\end{array}\right.
\end{equation}
where $\ell>0$ will be selected as below.
We define an operator $\mathbb{M}$ from $\left(L^2(\ot)\right)^4$ into itself as follows: Let $( w,v_1,v_2,u)\in \left(L^2(\ot)\right)^4$. We first consider the initial boundary value problem
\begin{eqnarray}
\partial_tn-\mdiv\left[\ep+\gamma\left(\theta_\ell( v_1)+\theta_\ell( v_2)\right)\theta_\ell^{\gamma-1}(w)\nabla n\right]&=&\theta_\ell(v_1) G(\theta_\ell( u))\nonumber\\
&&+\left( G(\theta_\ell( u))-D\right)\theta_\ell(v_2)\ \mbox{in $\ot$,}\label{aew1}\\
\nabla n\cdot\mathbf{n}&=&0\ \ \mbox{on $\Sigma_T$,}\nonumber\\
n(x,0)&=&n_0(x)\ \ \mbox{on $\Omega$. }\label{aewi}
\end{eqnarray}
For given $( w,v_1,v_2, u)$ the above problem for $n$  is linear and uniformly parabolic. Thus we can conclude from the classical result (\cite{LSU}, Chap. III) that there is a unique weak solution $n$ to \eqref{aew1}-\eqref{aewi} in the space $W_2(0,T)$.
Use the function $n$ so obtained to form the following two initial boundary problems 
\begin{eqnarray}
\partial_t\uo-\varepsilon\Delta \uo&=&\gamma\mdiv\left[\theta_\ell(v_1)\theta_\ell^{\gamma-1}(w)\nabla n\right]+\left(G(\theta_\ell( u))-K_1(\theta_\ell( u))\right)\theta_\ell(v_1)\nonumber\\
&&+\theta_\ell(v_2)K_2(\theta_\ell( u))\ \mbox{in $\ot$,}\label{aeuo1}\\
\nabla \uo\cdot\mathbf{n}&=&0\ \ \mbox{on $\Sigma_T$,}\nonumber\\
\uo(x,0)&=&n_{01}(x) \ \mbox{on $\Omega$, }\nonumber\\
\partial_t\ut-\varepsilon\Delta \ut&=&\gamma\mdiv\left[\theta_\ell(v_2)\theta_\ell^{\gamma-1}(w)\nabla n\right]+\left(G(\theta_\ell( u))-K_2(\theta_\ell( u))-D\right)\theta_\ell(v_2)\nonumber\\
&&+\theta_\ell(v_1)K_1(\theta_\ell( u))\ \mbox{in $\ot$,}\label{aeut1}\\
\nabla \ut\cdot\mathbf{n}&=&0\ \ \mbox{on $\Sigma_T$,}\label{autb1}\\
\ut(x,0)&=&n_{02}(x) \ \ \mbox{on $\Omega$. }\label{auicon1 }
\end{eqnarray}
Each of the two problems here has a unique solution in $W_2(0,T)$. Then we solve the following linear problem
\begin{eqnarray}
b \pt d-\Delta d&=&-(\psi(\theta_\ell( u))-a)\theta_\ell( w)-a\theta_\ell(v_1)\ \mbox{in $\ot$,}\nonumber\\
d&=&d_b\ \ \mbox{on $\Sigma_T$,}\nonumber\\
d(x,0)&=&d_0(x) \ \ \mbox{on $\Omega$. }\nonumber
\end{eqnarray}
We define $(n,\uo,\ut, d)=\mathbb{M}( w,v_1,v_2, u)$.
Evidently, $\mathbb{M}$ is well-defined.
\begin{clm} For each fixed pair $\varepsilon>0$ and $\ell>0$,
	the operator $\mathbb{M}$  is continuous and its range is precompact.
\end{clm}
\begin{proof}The key observation here is that each initial boundary value problem in the definition of $\mathbb{M}$ is linear and uniformly parabolic. This together with (H1) implies that $\mathbb{M}$ is continuous. One can easily verify that the range of  $\mathbb{M}$ is bounded in $\left(W_2(0,T)\right)^4$, which is compactly embedded in $\left(L^2(\ot)\right)^4$. It is similar to the proof of Lemma 2.4 in \cite{PX}. We shall omit the details.
	\end{proof}
Now we are in a position to apply Corollary 11.2 in (\cite{GT}, p.280), thereby obtaining that $\mathbb{M}$ has a fixed point. That is, there is a $(n,\uo,\ut, d)$ in $\left(W_2(0,T)\right)^4$ such that 
\begin{eqnarray}
	\partial_tn-\ep\Delta n&=&\gamma\mdiv\left[(\theta_\ell( \no)+\theta_\ell( \nt))\theta_\ell^{\gamma-1}(n)\nabla n\right]+ \theta_\ell(n_1) G(\theta_\ell( d))\nonumber\\
	&&+ \left( G(\theta_\ell( d))-D\right)\theta_\ell(n_2)\ \mbox{in $\ot$,}\label{aew2}\\
	\nabla n\cdot\mathbf{n}&=&0\ \ \mbox{on $\Sigma_T$,}\nonumber \\
	n(x,0)&=&  n_0(x)\ \ \mbox{on $\Omega$, }\nonumber\\
	\partial_t\uo-\varepsilon\Delta \uo&=&\gamma\mdiv\left[\theta_\ell(n_1)\theta_\ell^{\gamma-1}(n)\nabla n\right]+ \left(G(\theta_\ell( d))-K_1(\theta_\ell( d))\right)\theta_\ell(n_1)\nonumber\\
	&&+ \theta_\ell(n_2)K_2(\theta_\ell( d))\ \mbox{in $\ot$,}\label{aeuo2}\\
	\nabla \uo\cdot\mathbf{n}&=&0\ \ \mbox{on $\Sigma_T$,}\nonumber\\
	\uo(x,0)&=& n_{01}(x) \ \mbox{on $\Omega$, }\nonumber \\
	\partial_t\ut-\varepsilon\Delta \ut&=&\gamma\mdiv\left[\theta_\ell(n_2)\theta_\ell^{\gamma-1}(n)\nabla n\right]+ \left(G(\theta_\ell( d))-K_2(\theta_\ell( d))-D\right)\theta_\ell(n_2)\nonumber\\
	&&+ \theta_\ell(n_1)K_1(\theta_\ell( d))\ \mbox{in $\ot$,}\label{aeut2}\\
	\nabla \ut\cdot\mathbf{n}&=&0\ \ \mbox{on $\Sigma_T$,}\label{autb2}\\
	\ut(x,0)&=& n_{02}(x) \ \ \mbox{on $\Omega$, }\label{auicon2 }\\
	b \pt d-\Delta d&=&- (\psi(\theta_\ell( d))-a)\theta_\ell( n)-a \theta_\ell(n_1)\ \mbox{in $\ot$,}\label{efd}\\
	d&=&  d_b\ \ \mbox{on $\Sigma_T$,}\nonumber \\
	d(x,0)&=&  d_0(x) \ \ \mbox{on $\Omega$. }\label{efd1}
\end{eqnarray}
Now we pick 
	\begin{equation}\label{elf}
	\ell\geq L,
	\end{equation}
where $L$ is given as in \eqref{cl}.
	Note that
	\begin{equation}
	\theta_\ell( d)= \min\{d, \ell\}.\nonumber
	\end{equation}
	On account of \eqref{dz}, we have
	\begin{equation}
	(\psi(\theta_\ell( d))-a)(d-L)^+=(\psi(\theta_\ell( d))-\psi(d_0))(d-L)^+\geq 0\ \ \mbox{in $\ot$.}\nonumber
	\end{equation}
	With this in mind, we use $(d-L)^+$ as a test function in \eqref{efd} to derive
	\begin{eqnarray}
	\lefteqn{\frac{ b }{2}\frac{d}{dt}\io\left[(d-L)^+\right]^2dx+\io \left|\nabla (d-L)^+\right|^2dx}\nonumber\\
	&&=\io\left[- (\psi(\theta_\ell( d))-a)\theta_\ell( n)-a \theta_\ell(n_1)\right](d-L)^+dx\leq 0.\nonumber
	\end{eqnarray}
	Integrate to obtain
	\begin{equation}\label{db}
	d\leq L\ \ \mbox{in $\ot$.}
	\end{equation}
	Note that 
	\begin{equation*}
	\theta_\ell(n_1)=0\ \ \mbox{in $\{\no\leq 0\}$}.
	\end{equation*}
	With this in mind, we use $\uo^-$ as a test function in \eqref{aeuo1} to derive
	\begin{eqnarray*}
		-\frac{1}{2}\frac{d}{dt}\io\left(\uo^-\right)^2dx-\varepsilon\io|\nabla\uo^-|^2dx= \io\theta_\ell(n_2) K_2(\theta_\ell( d))\uo^-dx\geq 0.
	\end{eqnarray*}
	Consequently,
	\begin{equation*}
	\uo\geq 0.
	\end{equation*}
	By the same token, 
	\begin{equation*}
	\ut\geq 0.
	\end{equation*}
	Use $d^-$ as a test function in \eqref{efd} to get
	\begin{eqnarray}
	\lefteqn{-\frac{ b }{2}\frac{d}{dt}\io \left(d^-\right)^2dx-\io\left|\nabla d^-\right|^2dx}\nonumber\\
	&=&\io\left[- (\psi(\theta_\ell( d))-a)\theta_\ell( n)-a \theta_\ell(n_1)\right]d^-dx\nonumber\\
	&=&a \io\left[\theta_\ell( n)-\theta_\ell(n_1)\right]d^-dx\geq 0.\nonumber
	\end{eqnarray}
	Here we have used the fact that $\psi(0)=0$. Integrate to obtain
	\begin{equation}
	d\geq 0\ \ \mbox{in $\ot$.}
	\end{equation}
	This together with \eqref{db} implies
	\begin{equation}
	\theta_\ell( d)=d.
	\end{equation}
	Add \eqref{aeuo2} to \eqref{aeut2} and subtract the resulting equation from \eqref{aew2} to derive
	\begin{equation*}
	\partial_t(n-(\uo+\ut))-\varepsilon\Delta(n-(\uo+\ut))=0\ \ \mbox{in $\ot$.}
	\end{equation*}
	Recall the initial boundary conditions for $(n-(\uo+\ut))$ to deduce
	\begin{equation}\label{wot}
	n=\uo+\ut.
	\end{equation}
	Let $\lambda\in (0,\infty)$, and define
	\begin{equation}
	w=e^{-\lambda t}n.
	\end{equation}
	We easily check that $w$ satisfies
	\begin{eqnarray}
	\partial_tw+\lambda w-\ep\Delta w&=&\gamma\mdiv\left[(\theta_\ell( \no)+\theta_\ell( \nt))\theta_\ell^{\gamma-1}(e^{\lambda t}w)\nabla w\right]+  e^{-\lambda t}\theta_\ell(n_1) G( d)\nonumber\\
	&&+  e^{-\lambda t}\left( G( d)-D\right)\theta_\ell(n_2)\ \mbox{in $\ot$,}\label{aew11}\\
	\nabla w\cdot\mathbf{n}&=&0\ \ \mbox{on $\Sigma_T$,}\nonumber \\
	w(x,0)&=&  n_0(x)\ \ \mbox{on $\Omega$. }\nonumber
	\end{eqnarray}
	Set
	\begin{equation}\label{mz}
	M_0=\max\{ \max_{d\in [0,L]}|G(d)|, \max_{d\in [0,L]}|G(d)-D| \}.
	\end{equation}
	Then the last two terms in \eqref{aew11} can be estimated as follows:
	\begin{eqnarray}
	\lefteqn{\left|  e^{-\lambda t}\theta_\ell(n_1) G( d)
		+  e^{-\lambda t}\left( G( d)-D\right)\theta_\ell(n_2)\right|}\nonumber\\
	&\leq &  e^{-\lambda t}\theta_\ell(n_1) |G( d)|
	+  e^{-\lambda t}\left| G( d)-D\right|\theta_\ell(n_2)\nonumber\\
	&\leq &M_0  e^{-\lambda t}(\theta_\ell(n_1)+\theta_\ell(n_2))\nonumber\\
	&\leq &2M_0  e^{-\lambda t}\theta_\ell(n)\leq 2M_0w.\nonumber
	\end{eqnarray}
	It immediately follows that
	\begin{equation}
	\partial_tw+(\lambda-2M_0) w-\mdiv\left[\ep+\gamma(\theta_\ell( \no)+\theta_\ell( \nt))\theta_\ell^{\gamma-1}(e^{\lambda t}w)\nabla w\right]\leq 0 \ \mbox{in $\ot$.}\nonumber
	\end{equation}
	Choose $\lambda= 2M_0$. Then use $(w-\|n_0\|_{\infty, \Omega})^+$ as a test function in the above differential inequality to derive
	\begin{equation}
	w\leq \|n_0\|_{\infty, \Omega}\ \mbox{a.e. in $\ot$.}\nonumber
	\end{equation}
	This immediately implies
	\begin{equation}\label{nma}
	n\leq e^{2M_0 T}\|n_0\|_{\infty, \Omega}\ \mbox{a.e. in $\ot$.}
	\end{equation} 
	Thus if, in addition to \eqref{elf}, we further require
	\begin{eqnarray}\label{eft}
		\ell\geq e^{2M_0 T}\|n_0\|_{\infty, \Omega},
	\end{eqnarray}  then
	\begin{equation*}
	\theta_\ell(n)=n,\ \	\theta_\ell(n_1)=\uo,\ \ \theta_\ell(n_2)=\ut
	\end{equation*}
and problem \eqref{aew2}-\eqref{efd1} reduces to problem \eqref{aew}-\eqref{auicon}.
This completes the proof of Lemma \ref{athm}.
\end{proof}

Let $\ep\in(0,1)$. Replace $n_{01}(x)$ by $n_{01}(x)+\ep$ in \eqref{auicon} and denote the resulting solution to \eqref{aew}-\eqref{auicon} by $(\nne,\uoe,\ute,\de)$. That is,  we have
\begin{eqnarray}
\partial_t\nne-\varepsilon\Delta \nne&=&\gamma\mdiv\left[ \nneg^{\gamma}\nabla \nne\right]+G(\de)\uoe\nonumber\\
&&+(G(\de)-D)\ute\ \ \mbox{in $\ot$,}\label{eew}\\
\partial_t\uoe-\varepsilon\Delta \uoe&=&\gamma\mdiv\left[\uoe \nneg^{\gamma-1}\nabla \nne\right]\nonumber\\
&&+G(\de)\uoe-K_1(\de)\uoe+K_2(\de)\ute\ \mbox{in $\ot$,}\label{eeuo}\\
\partial_t\ute-\varepsilon\Delta \ute&=&\gamma\mdiv\left[\ute  \nneg^{\gamma-1}\nabla \nne\right]+(G(\de)-D)\ute\nonumber\\
&&+K_1(\de)\uoe-K_2(\de)\ute \ \mbox{in $\ot$,}\label{eeut}\\
b \pt \de-\Delta \de&=& -\psi(\de)\nne+a\ute\ \mbox{in $\ot$,}\label{eedt}\\
\nabla \nne\cdot\mathbf{n}&=&\nabla \uoe\cdot\mathbf{n}=\nabla \ute\cdot\mathbf{n}=0\ \ \mbox{on $\Sigma_T$,}\label{eutb}\\
\de&=&d_b\ \ \mbox{on $\Sigma_T$,}\label{edb}\\
\left.(\nne,\uoe,\ute,\de)\right|_{t=0}&=&(n_0(x)+\ep,n_{01}(x)+\ep,n_{02}(x),d_0(x) )\ \mbox{on $\Omega$.}\label{euicon}
\end{eqnarray}
In addition, we have
\begin{eqnarray}
\uoe\geq 0,\ \ \ute&\geq& 0,\ \ \nne=\uoe+\ute\leq c,\nonumber \\
0&\leq&\de\leq c.\label{web}
\end{eqnarray}
Here and in what follows the letter $c$ is independent of $\ve$. As we shall see, the addition of $\ve$ in \eqref{euicon} is to ensure that $\nne$ stays away from $0$ below.
\begin{lem}\label{l41} 
	We have
	\begin{eqnarray}
		\ioT\left|\nabla\left(\we\right)^{\frac{\gamma+1}{2}}\right|^2dxdt+\varepsilon\ioT\left(\left|\nabla\sqrt{\uoe}\right|^2+\left|\nabla\sqrt{\ute}\right|^2\right)dxdt\leq c.\nonumber
	\end{eqnarray}
\end{lem}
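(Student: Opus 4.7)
The plan is an entropy-type estimate for the pair $(\uoe, \ute)$. I would formally test \eqref{eeuo} by $\log \uoe$ and \eqref{eeut} by $\log \ute$ and add the two. Since the quantities can vanish, the rigorous path is to use $\log(\uoe + \delta)$ and $\log(\ute + \delta)$ with the convex function $h_\delta(s) = (s+\delta)\log(s+\delta)-(s+\delta)$, apply Lemma \ref{comt} for the $\partial_t$ pieces, and pass $\delta \to 0$ at the end. The $\ve\Delta$ pieces yield the Fisher-information contributions $4\ve\int_\Omega(|\nabla\sqrt{\uoe}|^2+|\nabla\sqrt{\ute}|^2)\,dx$, and the key cancellation comes from $\nne = \uoe+\ute$ (established in Lemma \ref{athm}): the two cross-diffusion contributions combine to
\[-\gamma\int_\Omega \nneg^{\gamma-1}\nabla\nne\cdot\nabla(\uoe+\ute)\,dx \;=\; -\gamma\int_\Omega \nneg^{\gamma-1}|\nabla\nne|^2\,dx \;=\; -\frac{4\gamma}{(\gamma+1)^2}\int_\Omega \left|\nabla\nneg^{(\gamma+1)/2}\right|^2 dx,\]
which, moved to the other side, produces exactly the porous-medium energy appearing in the statement.

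The heart of the argument is then to control the resulting right-hand side $\int_\Omega(R_1\log\uoe+R_2\log\ute)\,dx$ uniformly in $\ve$. Regrouping,
\[R_1\log\uoe+R_2\log\ute = G(\de)(\uoe\log\uoe+\ute\log\ute) - D\ute\log\ute + K_1(\de)\uoe\log(\ute/\uoe) + K_2(\de)\ute\log(\uoe/\ute).\]
Boundedness of $G$, $G-D$, $K_1$, $K_2$ on $[0,L]$ (from continuity and \eqref{web}), together with boundedness of $s\log s$ on the range $[0,c]$ of $\uoe$ and $\ute$, controls the first two terms. For the mixed pieces I would split on $\{\ute\geq 1\}$ versus $\{\ute<1\}$: on the former $|\log\ute|\leq\log c$, and on the latter $\log\ute\leq 0$ together with $\uoe\geq 0$ gives $K_1\uoe\log\ute\leq 0$, which is harmless for an upper bound; an identical argument handles $K_2\ute\log\uoe$. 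Thus the whole integrand is bounded above by a constant independent of $\ve$.

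Integrating in time from $0$ to $T$, the pointwise inequality $s\log s-s\geq -1$ gives the terminal entropy $\geq -2|\Omega|$, while the initial entropy is uniformly bounded because $n_{01},n_{02}\in L^\infty(\Omega)$ and $\ep\leq 1$. Since the two remaining LHS contributions are nonnegative, this closes the estimate and yields the claim, with $c$ depending on $\gamma$ but independent of $\ve$. The main technical obstacle is the rigorous use of $\log$ as a test function when $\uoe$ or $\ute$ may vanish: with the regularization $\log(\cdot+\delta)$, the cross-diffusion piece becomes $-\gamma\int_\Omega\nneg^{\gamma-1}\nabla\nne\cdot\left(\frac{\uoe}{\uoe+\delta}\nabla\uoe+\frac{\ute}{\ute+\delta}\nabla\ute\right)dx$, and the clean identity $\nabla\uoe+\nabla\ute=\nabla\nne$ only emerges in the limit $\delta\to 0$, whose justification (together with monotone convergence for the Fisher-information terms) is the delicate step before the uniform right-hand side bound can be used to close the estimate.
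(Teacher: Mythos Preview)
Your proposal is correct and matches the paper's proof essentially line for line: the paper also tests \eqref{eeuo} with $\ln(\uoe+\tau)$ and \eqref{eeut} with $\ln(\ute+\tau)$, bounds the reaction right-hand side by the same splitting on $\{\cdot+\tau\ge1\}$ versus $\{\cdot+\tau<1\}$ (dropping the nonpositive part), integrates in time, lets $\tau\to0$, and then adds the two resulting inequalities so that the cross-diffusion pieces combine via $\nabla\uoe+\nabla\ute=\nabla\nne$ into $\ioT\nabla(\nne)^{\gamma}\cdot\nabla\nne\,dxdt$. The only organizational difference is that the paper keeps the two equations separate until the very last step and adds the already-integrated inequalities, whereas you add at the level of the pointwise identity; both routes arrive at the same estimate.
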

\begin{proof}
	Pick $\tau>0$. Use $\ln(\uoe+\tau)$ as a test function in \eqref{eeuo} to derive
	\begin{eqnarray}
	\lefteqn{\frac{d}{dt}\io\left((\uoe+\tau)\ln(\uoe+\tau)-\uoe\right)dx+\io \frac{\uoe}{\uoe+\tau}\nabla  \weg^\gamma\nabla\uoe dx}\nonumber\\
	&&+\varepsilon\io\frac{1}{\uoe+\tau}|\nabla\uoe|^2\nonumber\\
	&=&\io\left(G(\de)\uoe-K_1(\de)\uoe+K_2(\de)\ute \right)\ln(\uoe+\tau)dx\nonumber\\
	&\leq&\io\left|\left(G(\de)-K_1(\de)\right)\uoe \ln(\uoe+\tau)\right|dx+\int_{\{\uoe+\tau\geq 1\}}K_2(\de)\ute\ln(\uoe+\tau)dx\nonumber\\
	&\leq &3C_0\io\we(\uoe+\tau)dx+2C_0\int_{\{\uoe+\tau\leq 1\}}|\uoe\ln\uoe| dx\leq c.\nonumber
	\end{eqnarray}
	Here 
	\begin{equation}\label{cz}
	C_0=\max\{\max_{d\in [0,L]}|G(d)|, \max_{d\in [0,L]}K_1(d),\max_{d\in [0,L]}K_2(d)\}.
	\end{equation}
	Integrate and take $\tau\rightarrow 0$ to get
	\begin{equation*}
	\ioT\nabla\weg^{\gamma}\cdot\nabla\uoe dxdt+4\varepsilon\ioT\left|\nabla\sqrt{\uoe}\right|^2dxdt\leq c.
	\end{equation*}
	Similarly,
	\begin{equation*}
	\ioT\nabla\weg^{\gamma}\cdot\nabla\ute dxdt+4\varepsilon\ioT\left|\nabla\sqrt{\ute}\right|^2dxdt\leq c.
	\end{equation*}
	Add up the two preceding inequalities to obtain the desired result.
\end{proof}
\begin{lem}\label{l322} The sequences $\{\we\}$ and $ \{\de\}$ are precompact in $L^p(\ot)$ for each $p\geq 1$.
\end{lem}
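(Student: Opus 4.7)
The two sequences can be handled separately.

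For $\{\de\}$, the equation \eqref{eedt} is linear uniformly parabolic with a uniformly bounded right-hand side (by \eqref{web} and continuity of $\psi$) and boundary datum $d_b\in L^2(0,T;W^{1,2}(\Omega))\cap L^\infty(\ot)$. The plan is to subtract an extension $\tilde d_b$ of $d_b$ into $\ot$ and test the equation against $\de-\tilde d_b\in L^2(0,T;W^{1,2}_0(\Omega))$ to obtain uniform-in-$\ep$ estimates $\de\in L^2(0,T;W^{1,2}(\Omega))$ and, reading the equation as an identity in the dual, $\partial_t\de\in L^2(0,T;(W^{1,2}(\Omega))^*)$. Lions--Aubin (Lemma~\ref{la}) then yields precompactness of $\{\de\}$ in $L^2(\ot)$, and interpolation with the $L^\infty$ bound from \eqref{web} upgrades this to $L^p(\ot)$ for every finite $p$.

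For $\{\we\}$, I set $v_\ep:=(\we)^{(\gamma+1)/2}$. Lemma~\ref{l41} together with \eqref{web} gives $\{v_\ep\}$ bounded in $L^\infty(\ot)\cap L^2(0,T;W^{1,2}(\Omega))$. Rewriting \eqref{eew} as
\begin{equation*}
\partial_t\we=\ep\Delta\we+\tfrac{\gamma}{\gamma+1}\Delta v_\ep^2+G(\de)\uoe+(G(\de)-D)\ute,
\end{equation*}
I would next check that each term on the right is uniformly bounded in $L^2(0,T;(W^{1,2}(\Omega))^*)$. Indeed, $\nabla v_\ep^2=2v_\ep\nabla v_\ep$ lies in $L^2(\ot)$ as a product of $L^\infty$ and $L^2$ functions; the bound on $\sqrt{\ep}\nabla\sqrt{\uoe},\sqrt{\ep}\nabla\sqrt{\ute}$ from Lemma~\ref{l41} upgrades via \eqref{web} (since $\sqrt{\uoe},\sqrt{\ute}\in L^\infty$) to $\sqrt{\ep}\nabla\we\in L^2(\ot)$, so $\ep\Delta\we$ is even $O(\sqrt{\ep})$ in the dual norm; and the reaction terms are bounded. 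Hence $\|\partial_t\we\|_{L^2(0,T;(W^{1,2})^*)}\le c$ uniformly in $\ep$.

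Precompactness of $\{\we\}$ then follows from a nonlinear Aubin--Lions / Alt--Luckhaus type compactness criterion: for $\{u_n\}$ bounded in $L^\infty(\ot)$, if $\Phi(u_n)$ is bounded in $L^2(0,T;W^{1,2}(\Omega))$ for some continuous strictly increasing $\Phi$ and $\{\partial_t u_n\}$ is bounded in $L^2(0,T;(W^{1,2})^*)$, then $\{u_n\}$ is precompact in $L^p(\ot)$ for every $p\in[1,\infty)$. Applied with $u_n=\we$ and $\Phi(s)=s^{(\gamma+1)/2}$, this closes the lemma. If one does not want to quote the criterion, the same conclusion can be obtained via Riesz--Fr\'echet--Kolmogorov as follows: the elementary inequality $|a^{2/(\gamma+1)}-b^{2/(\gamma+1)}|\le |a-b|^{2/(\gamma+1)}$ for $a,b\ge 0$ (valid because $2/(\gamma+1)\le 1$ when $\gamma\ge 1$) converts the $L^2$ spatial-translation estimate for $v_\ep$ into $\|\we(\cdot+y,\cdot)-\we(\cdot,\cdot)\|_{L^{\gamma+1}(\ot)}^{\gamma+1}\le c|y|^2$; a mollification--plus--optimization argument that pairs this spatial modulus with the $(W^{1,2})^*$ bound on $\partial_t\we$ then yields a uniform temporal modulus of continuity in $L^1(\ot)$; interpolation with the $L^\infty$ bound upgrades both moduli to $L^p$ and gives the desired precompactness.

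The main obstacle is the $\{\we\}$ part. Because the diffusion in \eqref{eew} is of degenerate porous-medium type, $\{\we\}$ itself admits no uniform-in-$\ep$ bound in any space of the form $L^p(0,T;W^{1,q}(\Omega))$; only the composite $v_\ep=(\we)^{(\gamma+1)/2}$ does. Thus the linear Lions--Aubin lemma cannot be applied to $\we$ directly, and the transfer of compactness from $v_\ep$ back to $\we$ through the H\"older--type composition inequality (or through a nonlinear Aubin--Lions lemma) is the only step that is not standard linear parabolic theory.
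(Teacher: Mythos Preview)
Your argument is correct, and for $\{\de\}$ it matches the paper's. For $\{\we\}$, however, the paper takes a different route: it first exploits the perturbed initial data $n_0+\ep$ to prove the strictly positive lower bound $\we\ge\ep\,e^{-M_0T}$ (via a comparison argument with $e^{M_0t}\nne$), which in turn guarantees $(\we)^r\in L^2(0,T;W^{1,2}(\Omega))$ for every real $r$. With this in hand the paper can legitimately differentiate $(\we)^{(\gamma+1)/2}$ in time, obtaining an explicit identity for $\partial_t(\we)^{(\gamma+1)/2}$ whose right-hand side is bounded in $L^2(0,T;(W^{1,2}(\Omega))^*)+L^1(\ot)$ by Lemma~\ref{l41}; the \emph{linear} Lions--Aubin lemma then gives precompactness of $\{(\we)^{(\gamma+1)/2}\}$ in $L^2(\ot)$, and a.e.\ convergence plus the $L^\infty$ bound pushes this to $\{\we\}$ in every $L^p$. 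In contrast, you bound $\partial_t\we$ (not $\partial_t v_\ep$) in the dual and invoke a nonlinear Aubin--Lions/Alt--Luckhaus criterion (or its Riesz--Fr\'echet--Kolmogorov proof) to pass from gradient control of $v_\ep$ to compactness of $\we$. Your path avoids the lower-bound step and the chain-rule computation entirely, and in particular explains why the paper perturbs the initial data by $\ep$ in the first place---a point your proof does not need. The paper's path, on the other hand, stays entirely within the elementary linear Lions--Aubin framework and never has to quote a nonlinear compactness theorem.
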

\begin{proof}It follows from \eqref{mz} and \eqref{eew} that
	\begin{equation}
	\partial_t\nne-\varepsilon\Delta \nne\geq\gamma\mdiv\left[ \nneg^{\gamma}\nabla \nne\right]-M_0\nne\ \ \mbox{in $\ot$.}
	\end{equation}
	Let $w^{(\ep)}=e^{M_0t}\nne$. Then we have
	\begin{equation}\label{aew4}
	\partial_tw^{(\ep)}-\varepsilon\Delta w^{(\ep)}\geq\gamma\mdiv\left[ \nneg^{\gamma}\nabla w^{(\ep)}\right]\ \ \mbox{in $\ot$.}
	\end{equation}
	Use $(\varepsilon-w^{(\ep)})^+$ as a test function in \eqref{aew4} to get
	\begin{equation}
	-\frac{1}{2}\frac{d}{dt}\io\left[(\varepsilon-w^{(\ep)})^+\right]^2dx-\gamma\io\weg^{\gamma}|\nabla(\varepsilon-w^{(\ep)})^+|^2dx-\varepsilon\io|\nabla(\varepsilon-w^{(\ep)})^+|^2dx
	\geq 0.
	\end{equation}
	Recall from \eqref{euicon} that $w^{(\ep)}(x,0)=	\we(x,0)\geq\varepsilon$.
	Integrate to obtain
	\begin{equation}\label{wep}
	\we\geq\varepsilon e^{-M_0T}. 
	\end{equation}
	Consequently, $\weg^r\in L^2(0,T; W^{1,2}(\Omega))$ for each $r\in \mathbb{R}$.
	We derive from \eqref{eew} that
	\begin{eqnarray}
	\partial_t\left(\we\right)^{\frac{\gamma+1}{2}}&=&\frac{\gamma+1}{2}\left(\we\right)^{\frac{\gamma+1}{2}-1}\partial_t\we\nonumber\\
	&=&\frac{\gamma+1}{2}\mdiv\left[\left(\we\right)^{\frac{\gamma+1}{2}}\nabla \weg^\gamma\right]-\frac{\gamma+1}{2}\nabla\left(\we\right)^{\frac{\gamma+1}{2}}\cdot\nabla \weg^\gamma\nonumber\\
	&&+\frac{(\gamma+1)\varepsilon}{2}\mdiv\left[\left(\we\right)^{\frac{\gamma+1}{2}-1}\nabla\we\right]-\frac{(\gamma+1)\varepsilon}{2}\nabla\left(\we\right)^{\frac{\gamma+1}{2}-1}\cdot\nabla \we\nonumber\\ &&+\frac{\gamma+1}{2}\left(\we\right)^{\frac{\gamma+1}{2}-1}(G(\de)\uoe+(G(\de)-D)\ute)\nonumber\\
	&=&\gamma\mdiv\left[\weg^{\gamma}\nabla\left(\we\right)^{\frac{\gamma+1}{2}} \right]-\frac{\gamma(\gamma-1)}{\gamma+1}\left(\we\right)^{\frac{\gamma+1}{2}-1}\left|\nabla\left(\we\right)^{\frac{\gamma+1}{2}}\right|^2\nonumber\\
	&&+\varepsilon\Delta\left(\we\right)^{\frac{\gamma+1}{2}}-(\gamma^2-1)\varepsilon\left(\we\right)^{\frac{\gamma+1}{2}-1}\left|\nabla \sqrt{\we}\right|^2\nonumber\\ &&+\frac{\gamma+1}{2}\left(\we\right)^{\frac{\gamma+1}{2}-1}\left(G(\de)\uoe+(G(\de)-D)\ute\right).\label{wcom}
	\end{eqnarray}
	Remember that $\frac{\gamma+1}{2}-1>0$. We can conclude from Lemma \ref{l41} that the sequence $\{\partial_t\left(\we\right)^{\frac{\gamma+1}{2}}\}$ is bounded in $L^2\left(0, T; \left(W^{1,2}(\Omega)\right)^*\right)+L^1(\ot)\equiv\{\psi_1+\psi_2:\psi_1\in L^2\left(0, T; \left(W^{1,2}(\Omega)\right)^*\right), \psi_2\in L^1(\ot) \}$. Now we are in a position to use (i) in Lemma \ref{la}, thereby obtaining the precompactness
	of $\{\left(\we\right)^{\frac{\gamma+1}{2}}\}$ in $L^2(\ot)$. 
	
	It is easy to see from \eqref{eedt} that $\{\de\}$ is bounded in $W_2(0,T)$.  The lemma follows from \eqref{web}.

\end{proof}

We may extract a subsequence of $\{(\we, \uoe,\ute, \de)\}$, still denoted by the same notation, such that
\begin{eqnarray}
\we&\rightarrow&n \ \ \mbox{a.e. in $\ot$ and strongly in $L^p(\ot)$ for each $p\geq 1$,}\label{wc}\\
\de&\rightarrow&d \ \ \mbox{a.e. in $\ot$ and strongly in $L^p(\ot)$ for each $p\geq 1$,}\label{dc}\\
\uoe&\rightarrow&\uo \ \ \mbox{weak$^*$ in $L^\infty(\ot)$,}\nonumber \\
\ute&\rightarrow&\ut \ \ \mbox{weak$^*$ in $L^\infty(\ot)$, and}\nonumber \\
\left(\we\right)^{\frac{\gamma+1}{2}}&\rightarrow& n^{\frac{\gamma+1}{2}}\ \ \mbox{weakly in $L^2(0,T; W^{1,2}(\Omega))$ as $\ve\ra 0$.}
\end{eqnarray}
Since $\{\we\}$ is bounded, we also have
\begin{equation*}
\left(\we\right)^{p}\rightarrow n^{p}\ \ \mbox{weakly in $L^2(0,T; W^{1,2}(\Omega))$ for each $p\geq \frac{\gamma+1}{2}$.}
\end{equation*}
This combined with \eqref{aew4} implies
\begin{equation*}
\partial_t\we\rightarrow \partial_tn\ \ \mbox{weakly in $L^2(0,T; \left(W^{1,2}(\Omega)\right)^*)$.}
\end{equation*}
Remember that $G, K_1, K_2, \psi$ are all continuous functions. We also have
\begin{eqnarray}
G(\we)&\rightarrow &G(n)\ \ \mbox{strongly in $L^p(\ot)$ for each $p\geq 1$,}\label{r1c}\\ 
\psi(\we)&\rightarrow &\psi(n)\ \ \mbox{strongly in $L^p(\ot)$ for each $p\geq 1$,}\ \ \mbox{and }\label{r3c}\\ 
K_i(\we)&\rightarrow& K_i(n)\ \ \mbox{strongly in $L^p(\ot)$ for each $p\geq 1$, $i=1,2$.}\label{r2c}
\end{eqnarray}

Our key result is the following.
\begin{lem}\label{l33}
Passing to a subsequence if necessary,	we have
	\begin{equation*}
	\left(\we\right)^{\gamma+1}\rightarrow n^{\gamma+1}\ \ \mbox{strongly in $L^2(0,T;W^{1,2}(\Omega))$.}
	\end{equation*}
\end{lem}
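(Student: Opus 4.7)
The plan is an energy-method argument. Since $\nabla \vgm$ is bounded in $L^2(\ot)$ --- this follows from Lemma \ref{l41} combined with the $L^\infty$-bound on $\we$ and the identity $|\nabla \vgm|^2 = 4\weg^{\gamma+1}|\nabla\weg^{(\gamma+1)/2}|^2$ --- extraction of a subsequence gives $\nabla\vgm\rightharpoonup \nabla n^{\gamma+1}$ weakly in $L^2(\ot)$, and strong convergence will follow once I prove
\[
\lim_{\ep\to 0}\ioT|\nabla\vgm|^2 \, dxdt \;=\; \ioT|\nabla n^{\gamma+1}|^2\, dxdt.
\]

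First, I test \eqref{eew} against $\vgm = \weg^{\gamma+1}$. Because $\we\geq \ep e^{-M_0T}>0$ (cf.~\eqref{wep}), $\we\in W_2(0,T)$ for each fixed $\ep$, so Lemma \ref{comt} applied with $h(s)=s^{\gamma+2}/(\gamma+2)$ produces the approximate energy identity
\[
\tfrac{1}{\gamma+2}\io\left(\we(\cdot,T)\right)^{\gamma+2}dx - \tfrac{1}{\gamma+2}\io(n_0+\ep)^{\gamma+2}dx + \tfrac{4\ep}{\gamma+1}\ioT\we\left|\nabla \weg^{(\gamma+1)/2}\right|^2 dxdt + \tfrac{\gamma}{\gamma+1}\ioT|\nabla\vgm|^2 dxdt = \ioT R^{(\ep)}\vgm\, dxdt,
\]
where I have used $\ep(\gamma+1)(n^{(\ep)})^\gamma|\nabla \we|^2 = \tfrac{4\ep}{\gamma+1}\we|\nabla \weg^{(\gamma+1)/2}|^2$; by Lemma \ref{l41} this $\ep$-term is $O(\ep)$ and drops out as $\ep\to 0$.

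Next I pass to the limit in the weak form of \eqref{eew}. For a smooth test function $\xi$ with $\nabla\xi\cdot\mathbf{n}=0$ on $\Sigma_T$, integration by parts rewrites $\ep\int_\Omega\nabla\we\cdot\nabla\xi\, dx$ as $-\ep\int_\Omega\we\,\Delta\xi\, dx$, which vanishes as $\ep\to 0$ by the uniform $L^\infty$-bound on $\we$; combined with the strong convergences of Lemma \ref{l322} and weak convergence of $\nabla\vgm$, this yields the limit problem
\[
\pt n - \tfrac{\gamma}{\gamma+1}\Delta n^{\gamma+1} = R \quad\text{in }\ot,
\]
with $n(\cdot,0)=n_0$ and the Neumann condition, where $R$ denotes the weak-$\ast$ limit of $R^{(\ep)}$. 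Consequently $\pt n\in L^2(0,T;(W^{1,2}(\Omega))^*)$, so $n\in C([0,T];(W^{1,2}(\Omega))^*)$ and $n(\cdot,T)$ is well defined. Writing the integrated identity $\io \we(\cdot,T)\xi\, dx = \io(n_0+\ep)\xi\, dx + \int_0^T\langle\pt\we,\xi\rangle dt$ and passing to the limit (using the limit equation on the right-hand side for a dense class of smooth $\xi$) gives $\io\we(\cdot,T)\xi\, dx\to\io n(\cdot,T)\xi\, dx$; density together with the $L^\infty$-bound upgrades this to $\we(\cdot,T)\rightharpoonup n(\cdot,T)$ weakly in $L^{\gamma+2}(\Omega)$, so by weak lower semicontinuity $\liminf_{\ep\to 0}\io\left(\we(\cdot,T)\right)^{\gamma+2}dx\geq \io n(\cdot,T)^{\gamma+2}dx$.

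Finally I test the limit equation against $v:=n^{\gamma+1}\in L^2(0,T;W^{1,2}(\Omega))$. The key input is the chain rule
\[
\int_0^T\langle \pt n, v\rangle\, dt \;=\; \tfrac{1}{\gamma+2}\io n(\cdot,T)^{\gamma+2}dx - \tfrac{1}{\gamma+2}\io n_0^{\gamma+2}dx.
\]
Since $n$ is not known to lie in $L^2(0,T;W^{1,2}(\Omega))$, Lemma \ref{comt} does not apply to $n$ directly; instead one invokes an Alt--Luckhaus-type chain rule with $v$ in the role of the ``differentiable'' variable and $n=v^{1/(\gamma+1)}$ as the associated monotone graph, so that $\tfrac{1}{\gamma+2}s^{\gamma+2}$ appears as the Legendre transform. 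This yields the limit energy identity
\[
\tfrac{1}{\gamma+2}\io n(\cdot,T)^{\gamma+2}dx - \tfrac{1}{\gamma+2}\io n_0^{\gamma+2}dx + \tfrac{\gamma}{\gamma+1}\ioT|\nabla n^{\gamma+1}|^2 dxdt = \ioT R\,n^{\gamma+1} dxdt.
\]
Passing $\ep\to 0$ in the approximate identity, its right-hand side converges to that of the limit identity (strong $L^p$-convergence of $\vgm$ to $n^{\gamma+1}$ from Lemma \ref{l322}, together with weak-$\ast$ convergence of $R^{(\ep)}$); hence the two liminf-terms $\liminf\io(\we(\cdot,T))^{\gamma+2}dx$ and $\liminf\tfrac{\gamma}{\gamma+1}\ioT|\nabla\vgm|^2$ must sum to exactly the two corresponding limit quantities. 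Since each liminf already dominates its counterpart, equality must hold term by term; in particular $\ioT|\nabla\vgm|^2 \to \ioT|\nabla n^{\gamma+1}|^2$, which combined with weak convergence gives the claimed strong convergence in $L^2(0,T;W^{1,2}(\Omega))$. The main obstacle is the chain-rule step: without a uniform $L^2(W^{1,2})$-bound on $n$ one must effectively work on the ``$v$-side'', swapping the roles of $n$ and $v$ through Legendre duality --- a step lying slightly beyond Lemma \ref{comt} as stated.
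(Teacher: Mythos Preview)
Your approach is valid and genuinely different from the paper's. The paper does not compare two energy identities; instead it first earns an additional time-regularity estimate by testing \eqref{aew44} with $\partial_t\bigl[(\we)^{\gamma+1}+\tfrac{\varepsilon(\gamma+1)}{\gamma}\we\bigr]$, obtaining a uniform bound on $\partial_t(\we)^{(\gamma+2)/2}$ in $L^2(\ot)$ and on $(\we)^{\gamma+1}$ in $L^\infty(0,T;W^{1,2}(\Omega))$. Lions--Aubin (case~(ii)) then gives precompactness of $\{(\we)^{\gamma+1}\}$ in $C([0,T];L^p(\Omega))$, hence \emph{strong} convergence of the traces $\int_\Omega(\we(\cdot,T))^{\gamma+2}dx\to\int_\Omega n(\cdot,T)^{\gamma+2}dx$. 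With this in hand, the paper subtracts the limit equation from \eqref{aew44} and tests the difference with $(\we)^{\gamma+1}-n^{\gamma+1}$, so that the gradient term appears directly as $\tfrac{\gamma}{\gamma+1}\ioT|\nabla[(\we)^{\gamma+1}-n^{\gamma+1}]|^2$ and each remaining term is shown to vanish.

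What your route buys is economy: you avoid the extra $\partial_t$-estimate entirely, getting by with weak lower semicontinuity at $t=T$ and the ``two liminf's summing to the exact limit'' trick. What the paper's route buys is the stronger intermediate conclusion $(\we)^{\gamma+1}\to n^{\gamma+1}$ in $C([0,T];L^p(\Omega))$, which handles the trace at $t=T$ without appeal to lower semicontinuity. It is worth noting that both arguments ultimately hinge on the same chain-rule identity $\int_0^T\langle\partial_t n,n^{\gamma+1}\rangle\,dt=\tfrac{1}{\gamma+2}\bigl[\int_\Omega n(\cdot,T)^{\gamma+2}-\int_\Omega n_0^{\gamma+2}\bigr]$ for the limit $n$; you explicitly flag that this lies beyond Lemma~\ref{comt} as stated (since $n\in L^2(0,T;W^{1,2}(\Omega))$ is not established) and invoke an Alt--Luckhaus argument, whereas the paper applies Lemma~\ref{comt} to $n$ without comment. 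Your honesty on this point is well placed.
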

\begin{proof} We have
	\begin{equation}\label{ws1}
	\we \nabla(\we)^\gamma=\frac{\gamma}{\gamma+1}\nabla\left(\we\right)^{\gamma+1}.
	\end{equation}
	Thus we can write \eqref{eew} in the form
	\begin{equation}\label{aew44}
	\partial_t\we-\frac{\gamma}{\gamma+1}\Delta w^{(\varepsilon)}
	=\re,
	\end{equation}
	where
	\begin{eqnarray}
	w^{(\varepsilon)}&=&\left(\we\right)^{\gamma+1}+\frac{\varepsilon(\gamma+1)}{\gamma} \we,\nonumber \\
	\re&=&\left(G(\de)\uoe+(G(\de)-D)\ute\right).\nonumber
	\end{eqnarray}
	We may assume that $\we$ is a classical solution to \eqref{aew44} because it can be viewed as the limit of a sequence of classical approximate solutions. Use $\partial_t	 w^{(\varepsilon)}$ as a test function in \eqref{aew44} to derive
	\begin{equation}\label{end1}
	\io 	\partial_t\we\partial_t	 w^{(\varepsilon)} dx+\frac{\gamma}{\gamma+1}\io\nabla w^{(\varepsilon)}\cdot\nabla\partial_t	 w^{(\varepsilon)} dx=\io\re\partial_t	 w^{(\varepsilon)} dx
	\end{equation}
	We proceed to evaluate each integral in the above equation as follows:
	\begin{eqnarray*}
		\io 	\partial_t\we\partial_t	\ome dx&=&(\gamma+1)\io \left(\we\right)^{\gamma}\left(\partial_t\we\right)^2dx\nonumber\\
		&&+\frac{\varepsilon(\gamma+1)}{\gamma}\io\left(\partial_t\we\right)^2dx,\\
		\io\nabla w^{(\varepsilon)}\cdot\nabla\partial_t	\ome dx&=&\frac{1}{2}\frac{d}{dt}\io\left|\nabla\ome\right|^2dx,\\
		\io\re\partial_t	\ome dx&=&(\gamma+1)\io\re	\left(\we\right)^{\gamma}\partial_t\we dx\nonumber\\
		&&+\frac{\varepsilon(\gamma+1)}{\gamma}\io\re\partial_t\we dx\nonumber\\
		&\leq &\frac{\gamma+1}{2}\io \left(\we\right)^{\gamma}\left(\partial_t\we\right)^2dx\nonumber\\
		&&+\frac{\gamma+1}{2}\io \left(\we\right)^{\gamma}\left(\re\right)^2dx\nonumber\\
		&&+\frac{\varepsilon(\gamma+1)}{2\gamma}\io\left(\partial_t\we\right)^2dx+\frac{\varepsilon(\gamma+1)}{2\gamma}\io\left(\re\right)^2dx.
	\end{eqnarray*}
	Plug the preceding three results into \eqref{end1} and integrate to derive
	\begin{equation*}
	\ioT\left(\partial_t\weg^{\frac{\gamma+2}{2}}\right)^{2}dxdt+\varepsilon\ioT\left(\partial_t\we\right)^2dxdt+\sup_{0\leq t\leq T}\io\left|\nabla\ome\right|^2dx\leq c.
	\end{equation*}
	Note
	\begin{eqnarray}
	\partial_t\left(\we\right)^{\gamma+1}&=&2\weg^{\frac{\gamma+2}{2}}\partial_t\weg^{\frac{\gamma+2}{2}},\nonumber \\
	\nabla \left(\we\right)^{\gamma+1}&=&(\gamma+1)\left(\we\right)^{\gamma}\nabla\we.\nonumber
	\end{eqnarray}
	On account of \eqref{web},	$\{\partial_t\left(\we\right)^{\gamma+1}\}$ is bounded in $L^2(\ot)$, while $\{\left(\we\right)^{\gamma+1}\}$ is bounded in $L^\infty(0,T; W^{1,2}(\Omega))$. By (ii) in Lemma \ref{la}, the sequence $\{\left(\we\right)^{\gamma+1}\}$ is precompact in $C([0, T], L^2(\Omega))$.
	Consequently, $\{\weg^{\gamma+1}\}$ is precompact in $C([0, T], L^p(\Omega))$ for each $p\geq 1$. This asserts
	\begin{equation}\label{pc}
	\io\left(\we(x,t)\right)^qdx\rightarrow\io n^q(x,t)dx\ 
	\ \mbox{for each $t\in [0,T]$ and each $q\geq \gamma+1$}
	\end{equation}
	(pass to a subsequence if need be.)
	
	Take $\varepsilon\rightarrow 0$ in \eqref{aew44} to obtain
	\begin{equation*}
	\partial_tn-\frac{\gamma}{\gamma+1}\Delta n^{\gamma+1}= R\equiv G(d)\no+(G(d)-D)\nt.
	\end{equation*}	
	Subtract this equation from \eqref{aew44} and keep \eqref{ws1} in mind to get
	\begin{eqnarray}
	\partial_t(\we-n)-\frac{\gamma}{\gamma+1}\Delta\left[\left(\we\right)^{\gamma+1}- n^{\gamma+1}\right]-\varepsilon\Delta \we
	&=&\re-R.\label{ws2}
	\end{eqnarray}
	Use $\left(\we\right)^{\gamma+1}- n^{\gamma+1}$ as a test function in \eqref{ws2} to derive
	\begin{eqnarray}
	\lefteqn{\frac{\gamma}{\gamma+1}\ioT\left|\nabla\left[\left(\we\right)^{\gamma+1}- n^{\gamma+1}\right]\right|^2dxdt}\nonumber\\
	&&+\varepsilon\ioT\nabla\we\cdot\nabla\left[\left(\we\right)^{\gamma+1}- n^{\gamma+1}\right]dxdt\nonumber\\
	&=& \ioT (\re-R)\left[\left(\we\right)^{\gamma+1}- n^{\gamma+1}\right]dxdt\nonumber\\
	&&-\int_0^T\left\langle\partial_t(\we-n),\left(\we\right)^{\gamma+1}- n^{\gamma+1}\right\rangle dt.\label{ws4}
	\end{eqnarray}	
	We will show that the last three terms in the above equation all go to $0$ as $\varepsilon\rightarrow 0$.
	It is easy to see from Lemma \ref{l41} that
	\begin{eqnarray}
	\lefteqn{\left|\varepsilon\ioT\nabla\we\cdot\nabla\left[\left(\we\right)^{\gamma+1}- n^{\gamma+1}\right]dxdt\right|}\nonumber\\
	&=&4\varepsilon\left|\ioT\sqrt{\we}\nabla\sqrt{\we}\cdot\left[\left(\we\right)^{\frac{\gamma+1}{2}}\nabla\left(\we\right)^{\frac{\gamma+1}{2}}- n^{\frac{\gamma+1}{2}}\nabla n^{\frac{\gamma+1}{2}}\right]dxdt\right|\nonumber\\
	&\leq &c\sqrt{\varepsilon}\rightarrow 0\ \ \mbox{as $\varepsilon\rightarrow 0$.}\nonumber
	\end{eqnarray}
	Obviously, we have
	\begin{equation*}
	\ioT (\re-R)\left[\left(\we\right)^{\gamma+1}- n^{\gamma+1}\right]dxdt\rightarrow 0\ \ \mbox{as $\varepsilon\rightarrow 0$.}
	\end{equation*}
	Finally,  we compute from Lemma \ref{comt} and \eqref{pc} that
	\begin{eqnarray*}
		\lefteqn{\int_0^T\left\langle\partial_t(\we-n),\left(\we\right)^{\gamma+1}- n^{\gamma+1}\right\rangle dt}\nonumber\\
		&=&\frac{1}{\gamma+2}\int_{0}^{T}\left[\frac{d}{dt}\io\left(\we\right)^{\gamma+2}dx+\frac{d}{dt}\io n ^{\gamma+2}dx\right]dt\nonumber\\
		&&-\int_0^T\left\langle\partial_t\we, n^{\gamma+1}\right\rangle dt-\int_0^T\left\langle\partial_t n, \weg^{\gamma+1}\right\rangle dt\nonumber\\
		&=&\frac{1}{\gamma+2}\left[\io\left(\we(x,T)\right)^{\gamma+2}dx+\io n^{\gamma+2}(x,T) dx\right]\nonumber\\
		&&-\frac{2}{\gamma+2}\io\left(n_0(x)\right)^{\gamma+2}dx-\int_0^T\left\langle\partial_t\we, n^{\gamma+1}\right\rangle dt\nonumber\\
		&&-\int_0^T\left\langle\partial_t n, \weg^{\gamma+1}\right\rangle dt\nonumber\\
		&\rightarrow&\frac{2}{\gamma+2}\io n^{\gamma+2}(x,T) dx-\frac{2}{\gamma+2}\io\left(n_0(x)\right)^{\gamma+2}dx-2\int_0^T\left\langle\partial_tn, n^{\gamma+1}\right\rangle dt\nonumber\\
		&=&0.
	\end{eqnarray*}
	This completes the proof.
\end{proof}
\begin{proof}[Proof of Theorem \ref{mth} ]
	Equipped with the preceding lemmas, we can complete the proof of Theorem \ref{mth}.
	Keeping \eqref{wep} in mind, we can
	set
	\begin{equation*}
	\eta_1^{(\varepsilon)}=\frac{\uoe}{\we},\ \ \eta_2^{(\varepsilon)}=\frac{\ute}{\we}.
	\end{equation*}
	Suppose
	\begin{equation*}
	\eta_1^{(\varepsilon)}\rightarrow \eta_1,\ \ \eta_2^{(\varepsilon)}\rightarrow \eta_2\ \ \mbox{weak$^*$ in $L^\infty(\ot)$.}
	\end{equation*}
	We calculate
	\begin{eqnarray}
	\uoe\nabla\weg^{\gamma}&=&\eta_1^{(\varepsilon)}\we\nabla\weg^{\gamma}\nonumber\\
	&=&\frac{\gamma}{\gamma+1}\eta_1^{(\varepsilon)}\nabla\weg^{\gamma+1}\nonumber\\
	&\rightarrow &\frac{\gamma}{\gamma+1}\eta_1\nabla n^{\gamma+1}
	=\eta_1n\nabla n^{\gamma} \ \mbox{weakly in $\left(L^2(\ot)\right)^N$.}\nonumber
	\end{eqnarray}
	We claim that
	\begin{equation}\label{ff}
	\eta_1n=\uo\ \ \mbox{a.e. on $\ot$}.
	\end{equation}
	To see this, for each $\delta>0$ we deduce from Lemma \ref{l322} that
	\begin{equation*}
	\eta_1^{(\varepsilon)}(\we-\delta)^+\rightarrow \eta_1(n-\delta)^+\ \ \mbox{weak$^*$ in $L^\infty(\ot)$.}
	\end{equation*}
	Note that $\frac{(\we-\delta)^+}{\we}\leq 1$.
	As a result, we have
	\begin{equation*}
	\eta_1^{(\varepsilon)}(\we-\delta)^+=\uoe\frac{(\we-\delta)^+}{\we}\rightarrow \uo\frac{(n-\delta)^+}{n}\ \ \mbox{weak$^*$ in $L^\infty(\ot)$.}
	\end{equation*}
	We obtain
	\begin{equation*}
	\uo\frac{(n-\delta)^+}{n}=\eta_1(n-\delta)^+ \ \ \mbox{for each $\delta>0$}.
	\end{equation*}
	This implies that
	\begin{equation*}
	\uo=n\eta_1\ \ \mbox{on the set $\{n>0\}$.}
	\end{equation*}
	If $n=0$, then $\uo=0$, and we still have $\uo=n\eta_1$. This completes the proof of \eqref{ff}.
	Similarly, we can show
	\begin{equation*}
	\ute\nabla\weg^{\gamma}\rightarrow \ut\nabla n^\gamma\ \ \mbox{weakly in $\left(L^2(\ot)\right)^N$.}
	\end{equation*}
	We are ready to pass to the limit in \eqref{eeuo} and \eqref{eeut}, thereby finishing the proof of Theorem \ref{mth}.
\end{proof}
\section{The limit as $\gamma\rightarrow\infty$ and proof of theorem \ref{mth1}}
Once again, the proof will be divided into several lemmas.  Now the solution to our problem \eqref{euo}-\eqref{uicon} is denoted by $(\nng,\nog,\ntg,\dg)$. That is, we have
\begin{eqnarray}
\pt\nng-\frac{\gamma}{\gamma+1}\Delta \left(\nng\right)^{\gamma+1}&=&G(\dg)\nng-D\ntg\equiv R^{(\gamma)}\ \ \mbox{in $\ot$,}\label{gen}\\
\partial_t\nog-\mdiv\left(\nog\nabla \left(\nng\right)^{\gamma}\right)&=& G(\dg)\nog - K_1(\dg)\nog \nonumber\\
&&+ K_2(\dg)\ntg\equiv R_1^{(\gamma)}\ \ \mbox{in $\ot$,}\label{geuo}\\
\partial_t\ntg-\mdiv\left(\ntg\nabla \left(\nng\right)^{\gamma}\right)&=& (G(\dg)-D)\ntg +K_1(\dg)\nog \nonumber\\
&&- K_2(\dg)\ntg\equiv R_2^{(\gamma)}\  \ \mbox{in $\ot$,}\label{geut}\\
b \pt \dg-\Delta \dg&=&-\psi(\dg)\nng+a\ntg\ \ \mbox{in $\ot$,}\label{ged}\\
\nog\nabla  \left(\nng\right)^{\gamma}\cdot\mathbf{n}&=&\ntg\nabla\left(\nng\right)^{\gamma} \cdot\mathbf{n}=0\  \ \mbox{on $\Sigma_T\equiv\po\times(0,T) $,}\label{guob}\\
\dg&=&d_b\  \ \mbox{on $\Sigma_T$,}\label{gcb}\\
\left.	\left(\nng,\nog,\ntg, \dg\right)\right|_{t=0}&=&\left(n_0(x)+\frac{1}{\gamma}, n_{01}(x)+\frac{1}{\gamma}, n_{02}(x), d_0(x)\right)\ \ \mbox{on $\Omega$. }\label{guicon}
\end{eqnarray}
As before, the term $\frac{1}{\gamma}$ is added in \eqref{guicon} to ensure that $\nng$ stays away from $0$ below. Therefore, it possesses enough regularity properties. We wish to find and identify the limit of solutions as $\gamma\rightarrow\infty$.
By our analysis in the preceding section, we have
\begin{eqnarray}
\nog\geq 0,\ \ntg&\geq& 0,\ \nng=\nog+\ntg\leq c,\label{ngb1}\\
0\leq\dg&\leq& L,\label{ngb}
\end{eqnarray}
where $L$ is given as in \eqref{cl}. In \eqref{ngb1} and what follows, the generic positive number $c$ is independent of $\gamma$.
We may assume that there is a subsequence of $(\nog,\ntg,\nng, \dg)$, not relabeled, such that
\begin{equation}
\nog\rightarrow\no^{(\infty)},\ \ \ntg\rightarrow\nt^{(\infty)},\ \ \nng\rightarrow n^{(\infty)},\ \ \dg\rightarrow d^{(\infty)}\ \ \mbox{weak$^*$ in $L^\infty(\ot)$.}
\end{equation}
\begin{lem}
	Assume that 
	\begin{equation}
	\pt d_b\in L^2(0,T; W^{1,2}(\Omega)),\ \  d_0\in W^{1,2}(\Omega).
	\end{equation}
Then we have
\begin{equation}\label{hope5}
\ioT\left(\pt\dg\right)^2dxdt	\leq c.
\end{equation}
Furthermore, if (H6) and (H8) hold, then we have
\begin{equation}\label{hope11}
	\|\nabla\dg\|_{\infty,\ot}\leq c.
\end{equation}
\end{lem}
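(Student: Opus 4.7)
The proof hinges on the linear parabolic structure of \eqref{ged}: its right-hand side $-\psi(\dg)\nng+a\ntg$ is bounded in $L^\infty(\ot)$ uniformly in $\gamma$ by \eqref{ngb1}--\eqref{ngb} together with the continuity of $\psi$. For \eqref{hope5}, the plan is to subtract off the boundary datum via $\tilde d=\dg-d_b$, test the resulting zero-Dirichlet equation for $\tilde d$ with $\pt\tilde d$, and dispose of the $\Delta d_b$ term distributionally since only first-order spatial derivatives of $d_b$ are available. For \eqref{hope11}, the plan is to quote classical maximal $L^s$-regularity for linear parabolic equations.

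Concretely, for \eqref{hope5}, the substitution $\tilde d=\dg-d_b$ transforms \eqref{ged} into
\begin{equation*}
b\pt\tilde d-\Delta\tilde d=-\psi(\dg)\nng+a\ntg-b\pt d_b+\Delta d_b\ \ \mbox{in $\ot$,}
\end{equation*}
with $\tilde d=0$ on $\Sigma_T$ and $\tilde d(\cdot,0)=d_0-d_b(\cdot,0)\in W^{1,2}(\Omega)$. Testing with $\pt\tilde d$ yields, after integration in space,
\begin{equation*}
b\io(\pt\tilde d)^2 dx+\tfrac{1}{2}\tfrac{d}{dt}\io|\nabla\tilde d|^2 dx=\io\bigl(-\psi(\dg)\nng+a\ntg-b\pt d_b\bigr)\pt\tilde d\, dx+\io\Delta d_b\cdot\pt\tilde d\, dx.
\end{equation*}
The first right-hand integral is controlled via Young's inequality using the uniform $L^\infty$ bound on the source and the hypothesis $\pt d_b\in L^2(\ot)$. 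The problematic $\Delta d_b$ term is integrated by parts in space into $-\io\nabla d_b\cdot\nabla\pt\tilde d\, dx$; integrating in time and then by parts in $t$ transfers the time derivative onto $d_b$, yielding the manageable bulk term $\ioT\nabla\pt d_b\cdot\nabla\tilde d\, dxdt$ (finite by hypothesis) together with boundary-in-time terms $\bigl[\io\nabla d_b\cdot\nabla\tilde d\, dx\bigr]_0^T$, which are absorbed by Young's inequality together with a routine $L^\infty(0,T;W^{1,2}(\Omega))$ energy bound for $\tilde d$ obtained by testing \eqref{ged} with $\tilde d$ itself.

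For \eqref{hope11}, I would appeal to the classical linear parabolic $L^s$-theory (see Ladyzhenskaya--Solonnikov--Ural'tseva, Chap.~IV, or Lieberman's book). Under (H6) and (H8), the boundary datum is in $W^{1,s}(\ot)$ with $s>N+2$, the initial datum lies in $W^{1,\infty}(\Omega)$, $\po$ is $C^{1,1}$, and the right-hand side of \eqref{ged} belongs to $L^\infty(\ot)\subset L^s(\ot)$ uniformly in $\gamma$. These data place $\dg$ in the parabolic Sobolev space $W^{2,1}_s(\ot)$ uniformly in $\gamma$; the embedding $W^{2,1}_s(\ot)\hookrightarrow C^{1+\alpha,(1+\alpha)/2}(\overline{\ot})$ with $\alpha=1-(N+2)/s>0$ then delivers the uniform $L^\infty$ bound \eqref{hope11} for $\nabla\dg$.

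The main obstacle is making rigorous the integration-by-parts manipulation of the $\Delta d_b$ term in \eqref{hope5}, because only first-order spatial derivatives of $d_b$ are controlled; the cleanest way is to argue at the level of weak formulations, or to approximate $d_b$ by smooth functions and pass to the limit at the very end. For \eqref{hope11}, the only real care is to verify that the constants appearing in the parabolic $L^s$ regularity estimate and the Sobolev embedding depend solely on the given data and not on $\gamma$; this is automatic since the linear operator $b\pt-\Delta$ is $\gamma$-independent and all data and sources have been bounded uniformly in $\gamma$.
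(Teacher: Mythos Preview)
Your argument for \eqref{hope5} is correct and is essentially the paper's approach, only organized differently. The paper tests \eqref{ged} directly with $\partial_t(\dg-d_b)$, which produces the cross term $\io\nabla\dg\cdot\nabla\partial_t d_b\,dx$ immediately and never introduces $\Delta d_b$ at all; you instead substitute $\tilde d=\dg-d_b$ first, pick up the artificial $\Delta d_b$ term, and then undo it by two integrations by parts. The end result is the same set of estimates, but the paper's route is shorter and avoids the approximation discussion you flag as an obstacle.

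For \eqref{hope11} your approach is genuinely different from the paper's, and as stated it has a gap. Maximal $L^s$-regularity placing $\dg$ in $W^{2,1}_s(\ot)$ requires the inhomogeneous Dirichlet datum to lie in the parabolic trace space of $W^{2,1}_s$ on $\Sigma_T$ (roughly $W^{2-1/s,\,1-1/(2s)}_s$), and the initial datum in the corresponding Besov space $B^{2-2/s}_{s,s}(\Omega)$. Hypothesis (H6) gives only $d_b\in W^{1,s}(\ot)$ and $d_0\in W^{1,\infty}(\Omega)$; for $s>N+2$ with $N\geq 2$ one has $2-2/s>1$, so neither datum meets the required regularity, and the equation for $\tilde d$ picks up $\Delta d_b$ on the right-hand side, which is not in $L^s$. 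The paper avoids this by differentiating \eqref{ged} in $x_i$ and running a parabolic De~Giorgi iteration on $\partial_{x_i}\dg$ (citing Proposition~2.3 of \cite{X3}); this needs only $\nabla d_b\in L^s$ with $s>N+2$ to handle the boundary terms after flattening, which is exactly what (H6) provides. Your appeal to $W^{2,1}_s\hookrightarrow C^{1+\alpha}$ would need either a stronger assumption on $d_b,d_0$ or a different regularity theorem (e.g.\ a $C^{1,\alpha}$ Schauder-type estimate working directly with first-order boundary data).
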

\begin{proof} Use $\pt(\dg-d_b)$ as a test function in \eqref{ged} to get
	\begin{eqnarray}
\lefteqn{	b	\io\left(\pt\dg\right)^2dx+\frac{1}{2}\frac{d}{dt}\io|\nabla\dg|^2dx}\nonumber\\
&=&b	\io\pt\dg\pt d_bdx+\io\nabla\dg\cdot\nabla\pt d_bdx\nonumber\\
&&+\io\left(-\psi(\dg)\nng+a\ntg\right)\pt(\dg-d_b)dx.\nonumber
	\end{eqnarray}
Integrate to derive
\begin{equation}
	\ioT\left(\pt\dg\right)^2dxdt+\sup_{0\leq t\leq T}\io|\nabla\dg|^2dx\leq c.
\end{equation}	
With the aid of our assumptions (H6) and (H8), we can easily modify the proof of Proposition 2.3 in \cite{X3} to obtain \eqref{hope11}. The basic strategy there is to derive an equation for $\partial_{x_i}\dg$ and then apply a parabolic version of DeGiorgi iteration technique to the resulting equations. The boundary estimate is achieved by flattening the relevant portion of the boundary. All these steps can be carried out here. We shall omit the details.
	The proof is complete.
	\end{proof}
Clearly, this lemma implies \eqref{dstro}.
Consequently,
\begin{equation}\label{sy10}
R^{(\gamma)}\rightarrow R^{(\infty)}=G(d^{(\infty)})n^{(\infty)}-D\nt^{(\infty)}\ \ \mbox{weak$^*$ in $L^\infty(\ot)$.}
\end{equation}

The core of our development is the following lemma.
\begin{lem}\label{l32}
	We have
	\begin{equation}\label{mass1}
		\ioT t\left(\vgm\right)^2dxdt+\ioT t\left|\nabla\vgm\right|^2dxdt\leq c.
	\end{equation}
\end{lem}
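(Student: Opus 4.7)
Proof plan. The natural first step is to test equation \eqref{gen} with the weighted function $t v^{(\gamma)}$. Using the chain-rule identity $v^{(\gamma)}\partial_t n^{(\gamma)} = \frac{1}{\gamma+2}\partial_t(n^{(\gamma)})^{\gamma+2}$ (which is justified by Lemma~\ref{comt}), integration by parts in $t$ to absorb the weight, and the Neumann boundary condition \eqref{guob} on $v^{(\gamma)}$, one arrives at
\begin{equation*}
  \frac{T}{\gamma+2}\int_\Omega (n^{(\gamma)}(T))^{\gamma+2}\,dx + \frac{\gamma}{\gamma+1}\int_0^T t\int_\Omega |\nabla v^{(\gamma)}|^2\,dxdt = \frac{1}{\gamma+2}\int_0^T\!\!\int_\Omega (n^{(\gamma)})^{\gamma+2}\,dxdt + \int_0^T t\int_\Omega v^{(\gamma)}R^{(\gamma)}\,dxdt.
\end{equation*}
The source term is controlled via $|R^{(\gamma)}|\leq c\,n^{(\gamma)}$ (from \eqref{ngb1} and \eqref{gz}), so $v^{(\gamma)}R^{(\gamma)}\leq c(n^{(\gamma)})^{\gamma+2}$. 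Consequently, the entire right-hand side is dominated by $\bigl(\tfrac{1}{\gamma+2}+cT\bigr)\int_0^T\!\int_\Omega (n^{(\gamma)})^{\gamma+2}\,dxdt$, and the desired gradient bound reduces to a uniform-in-$\gamma$ control of $\tfrac{1}{\gamma+2}\int_0^T\!\int (n^{(\gamma)})^{\gamma+2}$.

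This is where hypothesis (H7) enters decisively. Testing \eqref{gen} with the unweighted $v^{(\gamma)}$ and applying Gronwall gives a propagation estimate for $\|n^{(\gamma)}(t)\|_{\gamma+2}^{\gamma+2}$, and splitting $\int_\Omega n_0^{\gamma+2}$ across the level sets $\{n_0<\sigma\}$ and $\{n_0\geq\sigma\}$ and invoking the measure bound from (H7) gives the initial-data estimate
\begin{equation*}
  \int_\Omega n_0^{\gamma+2}\,dx \leq \sigma^{\gamma+2}|\Omega| + \frac{\|n_0\|_\infty^{\gamma+1}|\Omega|}{e^{G_0 T}}.
\end{equation*}
The explicit thresholds in (H7) -- $\sigma<e^{-G_0 T}$ and the denominator $e^{G_0 T}\|n_0\|_\infty$ -- are calibrated to the $L^\infty$-growth factor $K=e^{G_0 T}\|n_0\|_\infty$ coming from the maximum principle. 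This calibration is needed to ensure that after propagation in time and division by $\gamma+2$, the resulting quantity remains bounded independently of $\gamma$.

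For the $\int_0^T t\int_\Omega (v^{(\gamma)})^2$ half of \eqref{mass1}, I would invoke the Poincar\'e-type inequality of Lemma~\ref{poin}. The key point is that $v^{(\gamma)}\leq\alpha^{\gamma+1}$ on $\{n^{(\gamma)}(\cdot,t)<\alpha\}$ for any $\alpha\in(0,1)$, which decays exponentially in $\gamma$; so one only needs this level set to have uniformly large measure. The $L^1$-mass bound $\|n^{(\gamma)}(t)\|_1\leq e^{G_0 T}\|n_0\|_1$, together with $\|n_0\|_1\leq(\sigma+e^{-G_0 T})|\Omega|$ (again from (H7)), provides just enough control to apply Lemma~\ref{poin} with $p=2$, yielding $\|v^{(\gamma)}(t)\|_{L^2}^2 \leq c\|\nabla v^{(\gamma)}(t)\|_{L^2}^2 + c\alpha^{2(\gamma+1)}|\Omega|$; multiplying by $t$, integrating, and combining with the gradient estimate then finishes the argument.

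The principal obstacle I foresee is the delicate balance between the Gronwall-type growth factor in the propagation of $\|n^{(\gamma)}(t)\|_{\gamma+2}^{\gamma+2}$ and the smallness extracted from (H7): making sure no exponential in $\gamma$ survives the division by $\gamma+2$ will likely require a more refined $L^p$ estimate that leverages the dissipation term $\int|\nabla v^{(\gamma)}|^2$ rather than relying on the crude $L^\infty$ maximum-principle bound alone.
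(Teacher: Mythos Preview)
Your opening move---testing \eqref{gen} with $t\,v^{(\gamma)}$---coincides with the paper's, and you correctly reduce matters to controlling $\int_\Omega v^{(\gamma)}\,dx$ (or equivalently $\int_\Omega (n^{(\gamma)})^{\gamma+2}\,dx$) uniformly in $\gamma$. However, both of the mechanisms you propose for this control fail, and the paper's actual argument is structurally different.

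\textbf{Gap 1: the Gronwall route.} Testing with the unweighted $v^{(\gamma)}$ gives $\frac{d}{dt}\int (n^{(\gamma)})^{\gamma+2}\leq (\gamma+2)G_0\int (n^{(\gamma)})^{\gamma+2}$, whence $\int (n^{(\gamma)}(t))^{\gamma+2}\leq e^{(\gamma+2)G_0T}\int n_0^{\gamma+2}$. Your (H7)-based bound on $\int n_0^{\gamma+2}$ contains the term $\|n_0\|_\infty^{\gamma+1}|\Omega|/e^{G_0T}$, and after multiplication by $e^{(\gamma+2)G_0T}$ this becomes $(e^{G_0T}\|n_0\|_\infty)^{\gamma+1}\cdot e^{G_0T}|\Omega|$, which grows exponentially in $\gamma$ whenever $\|n_0\|_\infty>e^{-G_0T}$---precisely the regime of interest. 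Dividing by $\gamma+2$ does not help. You flag this obstacle yourself, but the fix is not a ``more refined $L^p$ estimate'': the paper abandons this line entirely.

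\textbf{Gap 2: the $L^1$-mass route to Poincar\'e.} Your bound $\|n^{(\gamma)}(t)\|_{1}\leq e^{G_0T}(\sigma+e^{-G_0T})|\Omega|=(\sigma e^{G_0T}+1)|\Omega|$ is correct, but it exceeds $|\Omega|$, so Chebyshev at any level $\alpha\leq 1$ gives no information on $|\{n^{(\gamma)}(t)<\alpha\}|$. Lemma~\ref{poin} therefore cannot be invoked this way.

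\textbf{What the paper does instead.} Hypothesis (H7) is used not to bound $\|n_0\|_{\gamma+2}$ or $\|n_0\|_1$, but to propagate a \emph{level-set} estimate: setting $w^{(\gamma)}=e^{-G_0t}n^{(\gamma)}$ and testing the resulting subsolution inequality with (an approximation of) $\mathrm{sgn}^+(w^{(\gamma)}-\sigma)$ yields $\int(w^{(\gamma)}(t)-\sigma)^+\leq\int(n_0-\sigma)^+$, and the precise calibration of (H7) then forces $|\{n^{(\gamma)}(\cdot,t)\geq 1\}|\leq\sigma_0|\Omega|$ for some $\sigma_0<1$, uniformly in $t$ and $\gamma$. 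This measure bound is exactly what Lemma~\ref{poin} needs: applied to $(v^{(\gamma)}-1)^+$ it gives $\int v^{(\gamma)}\leq c\int|\nabla v^{(\gamma)}|+c$, so the right-hand side of your displayed energy identity can be absorbed into the dissipation. The term $\tfrac{1}{\gamma+2}\int v^{(\gamma)}$ (carrying no factor of $t$) is handled by an auxiliary estimate obtained from testing \eqref{gen} with $(n^{(\gamma)}-1)^+$. The bound on $\int t(v^{(\gamma)})^2$ then follows by a second application of Lemma~\ref{poin}.
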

\begin{proof}Let $G_0$ be given as in Theorem \ref{mth1}. Then
	\begin{equation}\label{rbd}
		R^{(\gamma)}\leq G_0\nng.
	\end{equation}
	Use this in \eqref{gen} and multiply through the resulting inequality by $e^{-G_0t}$ to get
	\begin{equation}\label{we1}
		\pt\wg-\frac{\gamma e^{\gamma G_0t}}{\gamma+1}\Delta\left(\wg\right)^{\gamma+1}\leq 0\ \ \mbox{in $\ot$,}
	\end{equation}
	where
	$$\wg=e^{-G_0t}\nng.$$
	For each $\ve>0$ we let
	\begin{equation}
		\eta_\ve(s)=\left\{\begin{array}{ll}
			1 &\mbox{if $s>\ve$,}\\
			\frac{1}{\ve}s&\mbox{if $0\leq s\leq\ve$,}\\
			0&\mbox{if $s<0$.}
		\end{array}\right.\nonumber
	\end{equation}
	We can easily check that
	\begin{equation}
		\eta_\ve(s)\rightarrow \mbox{sgn}_0^+(s)=\left\{\begin{array}{ll}
			1 &\mbox{if $s>0$,}\\
			0&\mbox{if $s\leq0$.}
		\end{array}\right. \ \ \mbox{as $\ve\rightarrow 0$.} \nonumber
	\end{equation}
	Let $\sigma\in \left(0, e^{-G_0T}\right)$ be given as in (H7). Clearly, $\eta_\ve\left(\wg-\sigma\right)\geq 0$. Multiply through \eqref{we1} by this function  to get
	\begin{eqnarray}
		\io\int_{0}^{\wg(x,t)}\eta_\ve\left(s-\sigma\right)dsdx
		&\leq& \io\int_{0}^{\wg(x,0)}\eta_\ve
		\left(s-\sigma\right)dsdx.\label{th10}
	\end{eqnarray}
	Take $\ve\ra 0$ in the above inequality to obtain
	\begin{eqnarray}
		\io \left(\wg(x,t)-\sigma\right)^+dx&\leq& \io \left(\wg(x,0)-\sigma\right)^+dx\nonumber\\
		&\leq&\left(\|n_0\|_{\infty,\Omega}+\frac{1}{\gamma}-\sigma\right)\left|\left\{n_0(x)+\frac{1}{\gamma}\geq \sigma\right\}\right|.\nonumber
	\end{eqnarray}
	Or equivalently,
	\begin{eqnarray}
		\io \left(\nng(x,t)-\sigma e^{G_0t}\right)^+dx
		&\leq& e^{G_0t}\left(\|n_0\|_{\infty,\Omega}+\frac{1}{\gamma}-\sigma\right)\left|\left\{n_0(x)+\frac{1}{\gamma}\geq \sigma\right\}\right|.\label{th12}
	\end{eqnarray}
	On the other hand,
	\begin{eqnarray}
		\io \left(\nng(x,t)-\sigma e^{G_0t}\right)^+dx	&\geq&\int_{\{\nng(x,t)\geq 1\}} \left(\nng(x,t)-\sigma e^{G_0t}\right)^+dx\nonumber\\
		&\geq& (1-\sigma e^{G_0t})\left|\left\{\nng(x,t)\geq 1\right\}\right|.\nonumber
	\end{eqnarray}
	This combined with \eqref{th12} implies
	\begin{eqnarray}
		\left|\left\{\nng(x,t)\geq 1\right\}\right|&\leq&\frac{e^{G_0t}\left(\|n_0\|_{\infty,\Omega}+\frac{1}{\gamma}-\sigma\right)}{1-\sigma e^{G_0t}} \left|\left\{n_0(x)+\frac{1}{\gamma}\geq \sigma\right\}\right|\nonumber\\
		&\ra&\frac{e^{G_0t}\left(\|n_0\|_{\infty,\Omega}-\sigma\right)}{1-\sigma e^{G_0t}} \left|\left\{n_0(x)\geq \sigma\right\}\right|\ \ (\mbox{as $\gamma\ra\infty$})\nonumber\\
		&\leq&\frac{e^{G_0t}\left(\|n_0\|_{\infty,\Omega}-\sigma\right)}{1-\sigma e^{G_0t}}\frac{1}{e^{G_0T}\|n_0\|_{\infty,\Omega}}|\Omega|.\label{th20}
	\end{eqnarray}
	The last step is due to our assumption (H7). We easily check
	$$\frac{e^{G_0t}\left(\|n_0\|_{\infty,\Omega}-\sigma\right)}{1-\sigma e^{G_0t}}<e^{G_0t}\|n_0\|_{\infty,\Omega}.$$
	Hence we can pick a number $\sigma_0\in\left(\frac{e^{G_0t}\left(\|n_0\|_{\infty,\Omega}-\sigma\right)}{1-\sigma e^{G_0t}}\frac{1}{e^{G_0T}\|n_0\|_{\infty,\Omega}},1 \right)$. Consequently,
	\begin{equation}\label{th21}
		\sup_{0\leq t\leq T}\left|\left\{\nng(x,t)\geq 1\right\}\right|\leq \sigma_0|\Omega|\ \ \mbox{at least for $\gamma$ sufficiently large}.
	\end{equation}
	Using $\left(\wg-\|n_0\|_{\infty,\Omega}-\frac{1}{\gamma}\right)^+$ as a test function in \eqref{we1}, we derive
	the weak maximum principle 
	\begin{equation}\label{th1}
		\wg\leq \|n_0\|_{\infty,\Omega}+\frac{1}{\gamma}\ \ \mbox{in $\ot$}.	
	\end{equation}
	This together with \eqref{rbd} implies
	\begin{equation}
		R^{(\gamma)}\leq G_0e^{G_0t}\left(\|n_0\|_{\infty,\Omega}+\frac{1}{\gamma}\right).
	\end{equation}
	Let $\vgm$ be given as in \eqref{hope12}.
	Use $t\vgm$ as a test function in \eqref{gen} to deduce
	\begin{eqnarray}
		\lefteqn{\frac{1}{\gamma+2}	\frac{d}{dt}\io t\left(\nng\right)^{\gamma+2}dx+\frac{\gamma t}{\gamma+1}\io\left|\nabla\vgm\right|^2dx}\nonumber\\
		&	=&\frac{1}{\gamma+2}\io \left(\nng\right)^{\gamma+2}dx+t\io R^{(\gamma)}\vgm dx\nonumber\\
		&\leq&\frac{e^{G_0T}\left(\|n_0\|_{\infty,\Omega}+\frac{1}{\gamma}\right)}{\gamma+2}\io \vgm dx+G_0e^{G_0T}\left(\|n_0\|_{\infty,\Omega}+\frac{1}{\gamma}\right)t\io \vgm dx.\label{hope1}
	\end{eqnarray}
	Since
	$$	\left|\left\{\nng(x,t)\geq 1\right\}\right|+	\left|\left\{\nng(x,t)< 1\right\}\right|=|\Omega|,$$
	the inequality \eqref{th21} implies 
	$$\left|\left\{\nng(x,t)< 1\right\}\right|>(1-\sigma_0)|\Omega|.$$
	Evidently,
	$$\left(\vgm-1\right)^+= 0\ \ \mbox{on $\left\{\nng(x,t)< 1\right\}$.}$$
	This puts us in a position to apply Lemma \ref{poin}. Upon doing so, we arrive at
	\begin{equation}\label{sy13}
		\io \left(\vgm-1\right)^+dx\leq c\io \left|\nabla\left(\vgm-1\right)^+\right|dx=c\int_{\{\nng(x,t)\geq 1\}} \left|\nabla\vgm \right|dx.	
	\end{equation}
To estimate the first term on the right-hand side of \eqref{hope1}, we use $(\nng-1)^+$ as a test function in \eqref{gen} to get
\begin{equation}\label{hope2}
\sup_{0\leq t\leq T}	\io\left[(\nng-1)^+\right]^2dx+\gamma\ioT\left(\nng\right)^\gamma\left|\nabla(\nng-1)^+\right|^2dxdt\leq c.
\end{equation}
	For each $\varepsilon>0$ we estimate
	\begin{eqnarray}
		\io\vgm dx&=&\int_{\{\nng(x,t)\geq 1\}}\vgm dx+\int_{\{\nng(x,t)< 1\}}\vgm dx\nonumber\\
		&\leq&\io\left(\vgm-1\right)^+dx+c\nonumber\\
		&\leq &c\int_{\{\nng(x,t)\geq 1\}} \left|\nabla\vgm \right|dx+c\nonumber\\
		&=&c(\gamma+1)\io\left(\nng\right)^\gamma\left|\nabla(\nng-1)^+\right|dx+c\nonumber\\
		&\leq&\varepsilon \io \left(\nng\right)^\gamma dx+c(\varepsilon)(\gamma+1)^2\io\left(\nng\right)^\gamma\left|\nabla(\nng-1)^+\right|^2dx+c\nonumber\\
		&\leq&\frac{\varepsilon}{\|\nng\|_{\infty,\ot}} \io \vgm dx+c(\varepsilon)(\gamma+1)^2\io\left(\nng\right)^\gamma\left|\nabla(\nng-1)^+\right|^2dx+c.\nonumber
	\end{eqnarray}
By choose $\varepsilon$ suitably small, we immediately get
\begin{equation}
		\io\vgm dx\leq c(\gamma+1)^2\io\left(\nng\right)^\gamma\left|\nabla(\nng-1)^+\right|^2dx+c.
\end{equation}
	Use this in \eqref{hope1},  then integrate, and apply \eqref{hope2} to obtain
	\begin{eqnarray}
	\lefteqn{	\frac{1}{\gamma+2}	\sup_{0\leq t\leq T}\io t\left(\nng\right)^{\gamma+2}dx+\frac{\gamma}{\gamma+1}\ioT t\left|\nabla\vgm\right|^2dxdt}\nonumber\\
	&\leq &c(\gamma+1)\ioT\left(\nng\right)^\gamma\left|\nabla(\nng-1)^+\right|^2dxdt+c\ioT t\vgm dxdt+c\nonumber\\
	&\leq &c\ioT t|\nabla \vgm|dxdt+c\nonumber\\
	&\leq &\frac{\gamma}{2(\gamma+1)}\ioT t\left|\nabla\vgm\right|^2dxdt+c.\nonumber
	\end{eqnarray}
Consequently,
\begin{equation}
	\ioT t\left|\nabla\vgm\right|^2dxdt\leq c.\nonumber
\end{equation}
By a calculation similar to \eqref{sy13},
\begin{equation}
		\ioT t\left(\vgm\right)^2dxdt\leq 	\ioT t\left[\left(\vgm-1\right)^+\right]^2dxdt+c\leq c	\ioT t\left|\nabla\left(\vgm-1\right)^+\right|^2dxdt+c\leq c.\nonumber
	\end{equation}
	This completes the proof of Lemma \ref{l32}.
	\end{proof}

	We see that the sequence $\{\vgm\}$ is bounded in $L^2(\tau,T; W^{1,2}(\Omega))$ for each $\tau\in (0,T)$. Thus we may assume that \eqref{vweak} holds.


\begin{proof}[Proof of \eqref{sy1} and \eqref{sy4}] We shall employ an argument from \cite{FH}. For each $\delta>0$ define
	\begin{equation}
		\Omega^{(\gamma)}_\delta=	\left\{(x,t)\in \ot:\nng(x,t)\geq1+\delta\right\}.
	\end{equation}
	We argue by contradiction. Suppose that \eqref{sy1} is not true. Then there is a $\delta>0$ such that
	\begin{equation}\label{sy3}
		\left|\Omega^{(\infty)}_{2\delta}\right|>0.
	\end{equation}
	We claim 
	\begin{equation}\label{sy2}
		\liminf_{\gamma\ra\infty}	\left|\Omega^{(\gamma)}_\delta\right|\equiv c_0>0.
	\end{equation}
	To see this, we estimate from \eqref{ngb} that
	\begin{eqnarray}
		\ioT\nng n^{(\infty)}\chi_{\Omega^{(\infty)}_{2\delta}}dxdt&=&\int_{\Omega^{(\infty)}_{2\delta}\cap\Omega^{(\gamma)}_\delta}\nng n^{(\infty)}dxdt+\int_{\Omega^{(\infty)}_{2\delta}\setminus\Omega^{(\gamma)}_\delta}\nng n^{(\infty)}dxdt\nonumber\\
		&\leq &e^{2G_0T}\left(\|n_0\|_{\infty,\Omega}+\frac{1}{\gamma}\right)^2	\left|\Omega^{(\gamma)}_\delta\right|+(1+\delta)\int_{\Omega^{(\infty)}_{2\delta}} n^{(\infty)}dxdt.\nonumber
	\end{eqnarray}	
	If $c_0$ in \eqref{sy2} is $0$, we take $\gamma\ra\infty$ in the above inequality to derive
	\begin{equation}
		\int_{\Omega^{(\infty)}_{2\delta}}n^{(\infty)}	n^{(\infty)}dxdt\leq (1+\delta)\int_{\Omega^{(\infty)}_{2\delta}} n^{(\infty)}dxdt.
	\end{equation}
	This is possible only if $\left|\Omega^{(\infty)}_{2\delta}\right|=0$. But this contradicts \eqref{sy3}. Thus \eqref{sy2} holds.
	On the other hand, for each $\tau\in(0,T)$ we have
	\begin{equation}
		c\geq \int_{\Omega^{(\gamma)}_\delta}t\vgm dxdt\geq\int_{\Omega^{(\gamma)}_\delta\cap(\Omega\times(\tau,T))}t\vgm dxdt\geq\tau (1+\delta)^{\gamma+1}\left|\Omega^{(\gamma)}_\delta\cap(\Omega\times(\tau,T))\right|.
	\end{equation}
That is,
$$\limsup_{\gamma\ra\infty}\left|\Omega^{(\gamma)}_\delta\cap(\Omega\times(\tau,T))\right|\leq 0\ \ \mbox{for  each $\tau\in(0,T)$. }$$
Obviously, this contradicts \eqref{sy2}. This completes the proof of \eqref{sy1}.
	
	Fix $\tau\in (0,T)$. First, we claim
	\begin{equation}\label{sy7}
		\lim_{\gamma\ra\infty}\int_{\tau}^{T}\io\left|1-\nng\right|\vgm dxdt=0.
	\end{equation}
To see this, let $\vep\in (0,1)$ be given. We estimate from \eqref{mass1} that
\begin{eqnarray}
	\int_{\tau}^{T}\io\left|1-\nng\right|\vgm dxdt&=&\int_{\{\left|1-\nng\right|\leq \vep\}\cap(\Omega\times(\tau,T))}\left|1-\nng\right|\vgm dxdt\nonumber\\
	&&+\int_{\{\nng>1+ \vep\}\cap(\Omega\times(\tau,T))}\left|1-\nng\right|\vgm dxdt\nonumber\\
	&&+\int_{\{\nng<1- \vep\}\cap(\Omega\times(\tau,T))}\left|1-\nng\right|\vgm dxdt\nonumber\\
	&\leq&c\vep+c\left|\{\nng>1+ \vep\}\cap(\Omega\times(\tau,T))\right|^{\frac{1}{2}}+c(1-\vep)^{\gamma+1}.\nonumber
\end{eqnarray}
Consequently,
\begin{equation}
	\limsup_{\gamma\ra\infty}\int_{\tau}^{T}\io\left|1-\nng\right|\vgm dxdt\leq c\vep.
\end{equation}
Since $\vep$ can be arbitrarily small, we yield \eqref{sy7}.

	Observe from \eqref{gen} that
	 the sequence $\{\pt\nng\}$ is bounded in $L^2\left(\tau,T; \left(W^{1,2}(\Omega)\right)^*\right)$. We can infer from Lions-Aubin's lemma that $\{\nng\}$ is precompact in $C\left([\tau,T]; \left(W^{1,2}(\Omega)\right)^*\right)$. We may assume
	that
	\begin{equation}
		\nng\ra n^{(\infty)}\ \ \mbox{strongly in $C\left([\tau,T]; \left(W^{1,2}(\Omega)\right)^*\right)$.}
	\end{equation}
Once again, we pass to a subsequence if need be.
With this in mind, we can deduce from \eqref{vweak} that
\begin{eqnarray}
	\int_{\tau}^{T}\io\left(1-\nng\right)\vgm dxdt&=&\int_{\tau}^{T}\langle 1-\nng, \vgm\rangle dt\nonumber\\
	&=&\int_{\tau}^{T}\langle 1-n^{(\infty)}, v^{(\infty)}\rangle dt=\int_{\tau}^{T}\io\left( 1-n^{(\infty)}\right) v^{(\infty)}dxdt.\nonumber
\end{eqnarray}
This together with \eqref{sy7} and \eqref{sy1} implies
\begin{equation}
	\left( 1-n^{(\infty)}\right) v^{(\infty)}=0,
\end{equation}
from which \eqref{sy4} follows.\end{proof}

Now we are ready to prove \eqref{vstro}.
\begin{proof}[Proof of \eqref{vstro}]
	Use $t^2\pt\vgm$ as a test function in \eqref{gen} to get
\begin{eqnarray}
	\lefteqn{(\gamma+1)t^2\io\left(\nng\right)^\gamma\left(\pt\nng\right)^2dx+\frac{\gamma}{2(\gamma+1)}\frac{d}{dt}\io t^2|\nabla\vgm|^2dx}\nonumber\\
	&=&\frac{\gamma}{\gamma+1}\io t |\nabla\vgm|^2dx+t^2\io R^{(\gamma)}\pt\vgm dx.\label{hope7}
\end{eqnarray}
To estimate the last integral in the above equation, we compute from \eqref{geut} that
\begin{eqnarray}
	-Dt^2\io\ntg\pt\vgm dx&=&-D\frac{d}{dt}\io t^2\ntg\vgm dx+2Dt\io \ntg\vgm dx+Dt^2\io\pt\ntg\vgm dx\nonumber\\
	&=&-D\frac{d}{dt}\io t^2\ntg\vgm dx+2Dt\io \ntg\vgm dx\nonumber\\
	&&-\frac{\gamma Dt^2}{\gamma+1}\io\frac{\ntg}{\nng}\left|\nabla\vgm\right|^2dx+Dt^2\io R_2^{(\gamma)}\vgm dx.\nonumber
\end{eqnarray}
Integrate and then apply \eqref{mass1} to deduce
\begin{equation}\label{hope6}
-D\int_{0}^{\tau}t^2\io\ntg\pt\vgm dxdt\leq c.
\end{equation}
Similarly,
\begin{eqnarray}
	t^2\io G(\dg)\nng\pt\vgm dx&=&\frac{\gamma+1}{\gamma+2}\frac{d}{dt}\io t^2G(\dg)\left(\nng\right)^{\gamma+2}dx\nonumber\\
	&&-\frac{2(\gamma+1)t}{\gamma+2}\io G(\dg)\left(\nng\right)^{\gamma+2}dx\nonumber\\
	&&-\frac{\gamma+1}{\gamma+2}\io t^2G^\prime(\dg)\pt\dg\left(\nng\right)^{\gamma+2}dx.\nonumber
\end{eqnarray}
Integrate and then use (H5), \eqref{mass1} and \eqref{hope5} to derive
\begin{equation}\label{hope8}
	\int_{0}^{\tau}\io t^2G(\dg)\nng\pt\vgm dxdt\leq \frac{\gamma+1}{\gamma+2}\io \tau^2G(\dg)\nng\vgm dx+c.
\end{equation}
Integrate \eqref{hope7} and then take into consideration of \eqref{hope6} and \eqref{hope8} to obtain
\begin{eqnarray}
\lefteqn{	(\gamma+1)\int_{0}^{\tau}\io t^2\left(\nng\right)^\gamma\left(\pt\nng\right)^2dxdt}\nonumber\\
&&+\frac{\gamma}{2(\gamma+1)}\io \tau^2|\nabla\vgm|^2dx
\leq \frac{\gamma+1}{\gamma+2}\io \tau^2G(\dg)\nng\vgm dx+c.\label{hope9}
\end{eqnarray}
We easily infer from \eqref{sy13} that
\begin{eqnarray}
	\io\vgm dx\leq c\io|\nabla\vgm|dx+c\leq \varepsilon \io|\nabla\vgm|^2dx+c(\varepsilon),\ \ \varepsilon>0.\label{hope13}
\end{eqnarray}
Use this in \eqref{hope9} and choose $\varepsilon$ suitably small in the resulting inequality to derive
\begin{equation}\label{hope10}
	(\gamma+1)\ioT t^2\left(\nng\right)^\gamma\left(\pt\nng\right)^2dxdt+\sup_{0\leq t\leq T}\io t^2|\nabla\vgm|^2dx\leq c.	
\end{equation}
This combined with \eqref{hope13} yields
\begin{equation}\label{re1}
	\sup_{0\leq t\leq T}\io t^2\left(\vgm\right)^2dx\leq c.	
\end{equation}
Use $t^2\left(\vgm-v^{(\infty)}\right)$ as a test function in \eqref{gen} to deduce
\begin{eqnarray}
\lefteqn{\io t^2\pt\nng\left(\vgm-v^{(\infty)}\right)dx}\nonumber\\
&&+	\frac{t^2\gamma}{\gamma+1}\io\nabla\vgm\cdot\nabla\left(\vgm-v^{(\infty)}\right)dx=t^2\io R^{(\gamma)}\left(\vgm-v^{(\infty)}\right)dx.\label{hope4}
\end{eqnarray}
Note that 
\begin{equation}
	\io t^2\pt\nng\vgm dx=\frac{1}{\gamma+2}\frac{d}{dt}\io t^2\left(\nng\right)^{\gamma+2}dx-\frac{2t}{\gamma+2}\io \left(\nng\right)^{\gamma+2}dx.
\end{equation}
Integrate to get
\begin{eqnarray}
	\ioT t^2\pt\nng\vgm dxdt&=& \frac{1}{\gamma+2}\io T^2\left(\nng(x,T)\right)^{\gamma+2}dx-\frac{2}{\gamma+2}\ioT t \left(\nng\right)^{\gamma+2}dxdt\nonumber\\
	&\ra& 0\ \ \mbox{as $\gamma\ra\infty$}.\nonumber
\end{eqnarray}
The last step is due to \eqref{re1}.
Keeping this and \eqref{hope4} in mind, we calculate
\begin{eqnarray}
	\lefteqn{\limsup_{\gamma\ra\infty}\ioT  t^2\left|\nabla\left(\vgm-v^{(\infty)}\right)\right|^2dxdt}\nonumber\\	&\leq& \limsup_{\gamma\ra\infty}\ioT  t^2\nabla\vgm\cdot\nabla\left(\vgm-v^{(\infty)}\right)dxdt\nonumber\\
	&\leq&\int_{0}^{T}\langle t\pt n^{(\infty)}, tv^{(\infty)}\rangle dt+\limsup_{\gamma\ra\infty}\ioT  t^2R^{(\gamma)}\left(\vgm-v^{(\infty)}\right)dxdt.\label{hope14} 
\end{eqnarray}
Observe that
\begin{equation}
	R^{(\gamma)}=\left(G(\dg)-G(d^{(\infty)})\right)\nng+G(d^{(\infty)})\nng-D\ntg.\nonumber
\end{equation}
Remember that $\{t\nng\},\{t\ntg\}$ are precompact in $C([0,T]; \left(W^{1,2}(\Omega)\right)^*)$. Furthermore, we have $G(d^{(\infty)})\in L^\infty(0,T; W^{1,\infty}(\Omega))$ due to (H5) and \eqref{hope11}. Hence
	\begin{eqnarray}
	\lefteqn{	\lim_{\gamma\ra\infty}\ioT  t^2R^{(\gamma)}\left(\vgm-v^{(\infty)}\right)dxdt}\nonumber\\
	&=&\lim_{\gamma\ra\infty}\int_{0}^{T}\left\langle  t\nng, tG(d^{(\infty)})\left(\vgm-v^{(\infty)}\right)\right\rangle dt\nonumber\\
	&&-D\lim_{\gamma\ra\infty}\int_{0}^{T}\left\langle  t\ntg, t\left(\vgm-v^{(\infty)}\right)\right\rangle dt=0.\label{re2}
	\end{eqnarray}
Use this in \eqref{hope14} to obtain
\begin{equation}\label{hope22}
	\limsup_{\gamma\ra\infty}\ioT  t^2\left|\nabla\left(\vgm-v^{(\infty)}\right)\right|^2dxdt\leq \int_{0}^{T}\langle t\pt n^{(\infty)}, tv^{(\infty)}\rangle dt.
\end{equation}
We wish to show that the right-hand side of the above inequality is $0$. To this end, we introduce a function
$$\Psi^{(\infty)}(s)=\left\{\begin{array}{ll}
	0&\mbox{if $s\leq 1$,}\\
	\infty&\mbox{if $s>1$.}
\end{array}\right.$$
Obviously, $\Psi^{(\infty)}(s)$ is convex and lower semicontinuous (\cite{H}, p.49) and
\begin{equation}
	\partial\Psi^{(\infty)}(s)=\vp_{\infty}(s),
	\end{equation}where $\vp_{\infty}(s)$ is given as in \eqref{hope20}. We claim that $t\mapsto\io\Psi^{(\infty)}(\nnf(x,t))dx$ is an absolutely continuous function on $ (0,T)$ and
\begin{equation}\label{rrt4}
	\frac{d}{dt}\io\Psi^{(\infty)}(\nnf(x,t))dx=\langle \pt n^{(\infty)}, v^{(\infty)}\rangle\ \ \mbox{for a.e. $t\in(0,T)$}.
\end{equation}
Note that Lemma \ref{comt} is not applicable here because we do not have $\nnf\in L^2(\tau,T; W^{1,2}(\Omega))$. We shall give a direct proof.
To do this, we infer from \eqref{re3} that
\begin{eqnarray}
	\lefteqn{\io\Psi^{(\infty)}(\nnf(x,t+\varepsilon))dx-\io\Psi^{(\infty)}(\nnf(x,t))dx}\nonumber\\
	&\geq&\io v^{(\infty)}(x,t)\left(\nnf(x,t+\varepsilon)-\nnf(x,t)\right)dx\nonumber\\
	&=&\langle\nnf(\cdot,t+\varepsilon)-\nnf(\cdot,t),v^{(\infty)}(\cdot,t) \rangle,\ \ \varepsilon>0.\label{rrt1}
\end{eqnarray}
Let $\zeta(t)\in C_0^\infty(0,T)$ be such that $\zeta(t)\geq 0$. Multiply through \eqref{rrt1} by $\frac{1}{\varepsilon}\zeta(t)$, integrate the resulting inequality over $(0,T)$, and thereby obtain
\begin{eqnarray}
\lefteqn{	\int_{0}^{T}\io\Psi^{(\infty)}(\nnf(x,t))dx\frac{\zeta(t-\varepsilon)-\zeta(t)}{\varepsilon}dt}\nonumber\\
&\geq&\int_{0}^{T}\left\langle\frac{\nnf(\cdot,t+\varepsilon)-\nnf(\cdot,t)}{\varepsilon},v^{(\infty)}(\cdot,t) \right\rangle\zeta(t)dt\ \ \mbox{ for $\varepsilon$ suitably small}.\label{rrt2}
\end{eqnarray}
We can easily take the limit $\varepsilon\ra 0$ on the left-hand side of the preceding inequality. To show that we can do the same for the right-hand side, we integrate \eqref{re4} over $(t, t+\varepsilon)$ to get
\begin{equation}
\nnf(x,t+\varepsilon)-\nnf(x,t)-\Delta\int_{t}^{t+\varepsilon}v^{(\infty)}ds	=\int_{t}^{t+\varepsilon}R^{(\infty)}(x,s)ds\ \ \mbox{in $\Omega$.}
\end{equation}
Using $\frac{1}{\varepsilon}v^{(\infty)}(x,t)$ as a test function in the above equation gives
\begin{eqnarray}
	\lefteqn{\left\langle\frac{\nnf(\cdot,t+\varepsilon)-\nnf(\cdot,t)}{\varepsilon},v^{(\infty)}(\cdot,t) \right\rangle}\nonumber\\
	&=&-\io\frac{1}{\varepsilon}\int_{t}^{t+\varepsilon}\nabla v^{(\infty)}ds\cdot\nabla v^{(\infty)}dx+\io\frac{1}{\varepsilon}\int_{t}^{t+\varepsilon}R^{(\infty)}(x,s)dsv^{(\infty)}dx.
\end{eqnarray}
We can verify
\begin{equation}\label{rrt3}
	\frac{1}{\varepsilon}\int_{t}^{t+\varepsilon}\nabla v^{(\infty)}(x,s)ds\ra \nabla v^{(\infty)}(x,t)\ \ \mbox{a.e on $\ot$.}
\end{equation}
It follows from \eqref{hope10} that
\begin{equation}
\sup_{t\geq\tau}	\left\|	\frac{1}{\varepsilon}\int_{t}^{t+\varepsilon}\nabla v^{(\infty)}(x,s)ds\right\|_{2,\Omega}\leq c(\tau),\ \ \tau\in (0,T).
\end{equation}
This together with \eqref{rrt3} implies
\begin{equation}
		\frac{1}{\varepsilon}\int_{t}^{t+\varepsilon}\nabla v^{(\infty)}(x,s)ds\ra \nabla v^{(\infty)}(x,t)\ \ \mbox{weakly in $\left(L^2(\Omega\times(\tau,T))\right)^N$.}
\end{equation}
Similarly,
$$ 	\frac{1}{\varepsilon}\int_{t}^{t+\varepsilon} R^{(\infty)}(x,s)ds\ra R^{(\infty)}(x,t)\ \ \mbox{strongly in $L^q(\ot)$ for each $q>1$.}$$
We are ready to evaluate
\begin{eqnarray}
\lefteqn{	\lim_{\varepsilon\ra 0}\int_{0}^{T}\left\langle\frac{\nnf(\cdot,t+\varepsilon)-\nnf(\cdot,t)}{\varepsilon},v^{(\infty)}(\cdot,t) \right\rangle\zeta(t)dt}\nonumber\\
&=&-\int_{0}^{T}\io\left|\nabla v^{(\infty)}\right|^2dx\zeta(t)dt+\int_{0}^{T}\io R^{(\infty)}v^{(\infty)}dx\zeta(t)dt\nonumber\\
&=&\int_{0}^{T}\left\langle\pt\nnf,v^{(\infty)}(\cdot,t)\right\rangle\zeta(t)dt
\end{eqnarray}
The last step is due to \eqref{re4}. Taking $\varepsilon\ra 0$ in \eqref{rrt2} yields
$$\frac{d}{dt}\io\Psi^{(\infty)}(\nnf(x,t))dx\geq\langle \pt n^{(\infty)}, v^{(\infty)}\rangle\ \ \mbox{in the sense of distributions}.$$
Replacing each occurrence of $\varepsilon$ by $-\varepsilon$ in the preceding proof, we can derive
$$\frac{d}{dt}\io\Psi^{(\infty)}(\nnf(x,t))dx\leq\langle \pt n^{(\infty)}, v^{(\infty)}\rangle\ \ \mbox{in the sense of distributions}.$$ This completes the proof of \eqref{rrt4}

With \eqref{rrt4} in mind, we calculate
\begin{eqnarray}
\int_{0}^{T}\langle t\pt n^{(\infty)}, tv^{(\infty)}\rangle dt&=&\int_{0}^{T}t^2\langle \pt n^{(\infty)}, v^{(\infty)}\rangle dt\nonumber\\
&=&	\int_{0}^{T}t^2 \frac{d}{dt}\io\Psi^{(\infty)}(\nnf(x,t))dxdt\nonumber\\
&=&\int_{0}^{T} \frac{d}{dt}\io t^2\Psi^{(\infty)}(\nnf(x,t))dxdt-2\int_{0}^{T} \io t\Psi^{(\infty)}(\nnf(x,t))dxdt\nonumber\\
&=&0.\label{hope21}
\end{eqnarray}
The last step is due to the fact that $\Psi^{(\infty)}(\nnf(x,t))\equiv0$. Combining \eqref{hope21} with \eqref{hope22} yields \eqref{vstro}.

To complete the proof of Theorem \ref{mth1}, we still need to verify \eqref{com}. To this end, we multiply through \eqref{gen} by $\vgm$ to get
$$\frac{1}{\gamma+2}\pt\left(\nng\right)^{\gamma+2}-\frac{\gamma}{\gamma+1}\left(\mdiv(\vgm\nabla\vgm)-|\nabla\vgm|^2\right)=R^{(\gamma)}\vgm.$$
Even though it is not clear if $\{t\vgm\}$ is precompact in $L^2(\ot)$ because we do not have any estimates on $\pt\vgm$, \eqref{vstro} and \eqref{re2} are enough to justify passing to the limit in the above equation, thereby obtaining \eqref{com}. This finishes the proof of Theorem \ref{mth1}.
\end{proof}

\end{document}